\documentclass[11pt]{article}
\usepackage[english]{babel}
\usepackage[utf8x]{inputenc}
\usepackage[T1]{fontenc}

\usepackage[letterpaper,top=1in,bottom=1in,left=1in,right=1in,marginparwidth=1.75cm]{geometry}

\usepackage{amsmath, amssymb, amsthm, verbatim,enumerate,bbm}
\usepackage{indentfirst, bbm}
\usepackage{appendix}
\usepackage{enumerate}
\usepackage{verbatim}
\usepackage{mathrsfs}
\usepackage[colorinlistoftodos]{todonotes}
\usepackage[colorlinks=true, allcolors=blue]{hyperref}
\usepackage[nameinlink,capitalise,noabbrev]{cleveref}
\crefname{appsec}{Appendix}{Appendices}
\creflabelformat{enumi}{#2textup{#1}#3}

\date{\today}
\parindent 5mm
\parskip 0.2mm
\oddsidemargin  0pt \evensidemargin 0pt \marginparwidth 0pt
\marginparsep 0pt \topmargin 0pt \headsep 0pt \textheight 8.8in
\textwidth 6.6in

\allowdisplaybreaks

\newtheorem{theorem}{Theorem}[section]
\newtheorem*{namedtheorem}{\theoremname}
\newcommand{\theoremname}{testing}

\newtheorem{lemma}[theorem]{Lemma}

\newtheorem{proposition}[theorem]{Proposition}

\newtheorem{corollary}[theorem]{Corollary}

\newtheorem{conjecture}[theorem]{Conjecture}
\newtheorem*{question*}{Question}

\theoremstyle{definition}
\newtheorem{definition}[theorem]{Definition}

\newtheorem{remark}[theorem]{Remark}

\theoremstyle{plain}

\usepackage{algpseudocode,algorithm,algorithmicx}

\newcommand{\Bin}{\ensuremath{\textrm{Bin}}}


\newcommand{\Ex}{\mathbb{E}}

\global\long\def\E{\mathbb{E}}



\DeclareMathOperator{\la}{la}

\title{Towards the linear arboricity conjecture}
\author{Asaf Ferber \thanks{Massachusetts Institute of Technology. Department of Mathematics. Email: {\tt ferbera@mit.edu}. Research is partially supported by NSF 6935855.} \and 
Jacob Fox \thanks{Stanford University. Department of Mathematics. Email: {\tt jacobfox@stanford.edu}. Research
supported by a Packard Fellowship and by NSF Career Award DMS-1352121.} \and
Vishesh Jain\thanks{Massachusetts Institute of Technology. Department of Mathematics. Email: {\tt visheshj@mit.edu}. Research is partially supported by NSF CCF 1665252,  NSF DMS-1737944 and ONR N00014-17-1-2598.} }
\date{}

\begin{document}
\maketitle
\begin{abstract}
The linear arboricity
 of a graph $G$, denoted by $\text{la}(G)$, is the minimum number of edge-disjoint linear forests (i.e. forests in which every connected component is a path) in $G$ whose union covers all the edges of $G$. A famous conjecture due to Akiyama, Exoo, and Harary from 1980 asserts that $\text{la}(G)\leq \lceil (\Delta(G)+1)/2 \rceil$, where $\Delta(G)$ denotes the maximum degree of $G$. This conjectured upper bound would be best possible, as is easily seen by taking $G$ to be a regular graph. In this paper, we show that for every graph $G$, $\text{la}(G)\leq \frac{\Delta}{2}+O(\Delta^{2/3-\alpha})$ for some $\alpha > 0$, thereby improving the previously best known bound due to Alon and Spencer from 1992. For graphs which are sufficiently good spectral expanders, we give even better bounds. Our proofs of these results further give probabilistic polynomial time algorithms for finding such decompositions into linear forests.
\end{abstract}

\section{Introduction}
A \emph{linear forest} is a forest in which every connected component is a path. Given a graph $G$, we define its \emph{linear arboricity}, denoted by $\text{la}(G)$, to be the minimum number of edge-disjoint linear forests in $G$ whose union is $E(G)$. This notion was introduced by Harary \cite{Ha} in 1970 as one of the covering invariants of graphs, and has been studied quite extensively since then.

It is immediate that $\la(G) \leq e(G)$ as every edge $uv$ (along with the isolated vertices $V(G)\setminus\{u,v\}$) forms a linear forest. A less trivial upper bound can be obtained as follows: by a classical theorem due to Vizing, $E(G)$ can be partitioned into at most $\Delta+1$ matchings, where $\Delta:=\Delta(G)$ denotes the maximum degree of $G$; observe that each matching is a linear forest, and therefore we get that $\la(G) \leq \Delta + 1$. For a lower bound, note that every linear forest has at most $n-1$ edges (and equality holds if and only if the linear forest is a Hamiltonian path). Therefore, if $G$ is a $\Delta$-regular graph, then 
$$\la(G) \geq \frac{e(G)}{(n-1)} \geq \frac{n\Delta}{2(n-1)} > \frac{\Delta}{2},$$
which implies (recall that $\la(G)$ is an integer) that $\la(G) \geq \lceil(\Delta+1)/2\rceil$. The following conjecture, known as the \emph{linear arboricity conjecture}, of Akiyama, Exoo and Harary \cite{AEH} asserts that this bound is the best possible:

\begin{conjecture}[The linear arboricity conjecture]
\label{conjecture:LAC}
Let $G$ be a graph of maximum degree $\Delta$.  Then,
$$\text{\emph{la}}(G)\leq \left\lceil \frac{\Delta+1}{2}\right\rceil.$$
\end{conjecture}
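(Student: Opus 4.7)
My plan is to reduce to regular graphs, extract a $2$-factorization, and then break all cycles of every $2$-factor by a globally consistent rerouting argument.

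First I would reduce to the case that $G$ is $\Delta$-regular. A standard embedding trick puts $G$ (or two disjoint copies, if $\Delta$ is odd) inside a $\Delta$-regular supergraph $\widetilde{G}$ obtained by attaching a bounded number of auxiliary vertices and edges; any decomposition of $\widetilde{G}$ into $\lceil (\Delta+1)/2 \rceil$ linear forests restricts to such a decomposition of $G$. Hence it suffices to prove the conjecture for $\Delta$-regular $G$.

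When $\Delta = 2k$, Petersen's theorem partitions $E(G)$ into $k$ edge-disjoint $2$-factors $F_1, \dots, F_k$. To upgrade this into a decomposition into $k$ linear forests, it suffices to delete one edge from each cycle of every $F_i$ and then reabsorb the deleted edges back into the remaining forests without creating cycles or violating path structure. Equivalently, I would seek an assignment of each cycle $C$ of each $F_i$ to an edge $e_C\in C$ and an index $j(C)\ne i$, such that the augmented graphs $F_j \cup \{e_C : j(C)=j\} \setminus \{e_{C'} : C' \subseteq F_j\}$ remain linear forests. This is a Hall-type condition on an ``augmentable slots'' bipartite graph, which I would try to verify by a discharging argument on the cycle-vertex incidence structure, exploiting the fact that long cycles contribute many candidate edges while short cycles contribute few but are also few in number in a $2k$-regular graph. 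For odd $\Delta=2k+1$ I would first pull off a perfect (or near-perfect) matching $M$ via K\"onig's theorem on an appropriate bipartite double cover, use $M$ as the extra linear forest, and recurse on $G\setminus M$.

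The main obstacle — and the reason the conjecture has resisted proof for over four decades — is that $\lceil (\Delta+1)/2 \rceil$ is attained with equality by every $\Delta$-regular graph, so the argument has zero slack. Every edge removed from some $F_i$ must be reabsorbed into a specific $F_j$, and the reabsorption must not create any cycle, any branching vertex of degree $3$, or any repeated edge; this forces a delicate global consistency that purely probabilistic methods (including the Lov\'asz Local Lemma and entropy compression) can only achieve up to a lower-order correction, which is precisely the source of the $O(\Delta^{2/3-\alpha})$ error in the main theorem of this paper. Pushing such an argument down to an exact bound seems to require either a derandomization of the cycle-breaking step via a structural theorem on $2$-factorizations of regular graphs — for instance, asserting that some $2$-factorization has an acyclic system of deletable edges — or an algebraic handle (perhaps via list-coloring on the line graph of a well-chosen auxiliary hypergraph) that I do not currently see how to obtain, so I would expect this step to be where the argument genuinely breaks down.
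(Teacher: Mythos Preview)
The statement you are attempting to prove is \cref{conjecture:LAC}, which the paper states as an \emph{open conjecture} and does not prove. The paper's actual contributions, \cref{main:large d} and \cref{main: random regular}, establish only the weaker asymptotic bound $\la(G)\le \Delta/2 + O(\Delta^{2/3-\alpha})$; there is no proof of the conjecture in the paper to compare your proposal against.

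You correctly diagnose the zero-slack reabsorption step as the essential obstruction, and your final paragraph is an honest admission that the argument does not close. Two concrete issues earlier in the sketch are worth flagging. First, your handling of odd $\Delta$ is broken: a perfect matching in the bipartite double cover of $G$ projects down to a $2$-regular spanning subgraph of $G$, not to a matching, so K\"onig's theorem on the double cover does not produce the matching $M$ you need --- and indeed there exist $(2k{+}1)$-regular graphs with no perfect matching at all (e.g., cubic graphs with a cut-vertex whose removal leaves three odd components, violating Tutte's condition). Second, the Hall-type/discharging assertion for reabsorbing the deleted cycle edges is stated without any mechanism; ``long cycles contribute many candidate edges'' is not enough, since a single short cycle can already force a specific edge into a specific $F_j$ and cascade. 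This is exactly the step that every known approach fails to complete, and the paper's machinery --- the vertex partition of \cref{lemma: vertex partitioning}, the Hamiltonian path decomposition of $K_t$, and the nibble-based \cref{lemma:avoiding short cycles} --- is designed precisely to sidestep it at the price of the $O(\Delta^{2/3-\alpha})$ error term rather than to eliminate it.
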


\begin{remark}
It is easy to see that every graph $G$, with maximum degree $\Delta(G)$, can be embedded into a $\Delta(G)$-regular graph (perhaps on a greater number of vertices). Therefore, the above conjecture is equivalent to the statement that for a $\Delta$-regular graph $G$ we have $\text{la}(G) =\lceil (\Delta(G)+1)/2 \rceil$.  
\end{remark}

The linear arboricity conjecture was shown to be asymptotically correct as $\Delta \to \infty$ by Alon in 1988 \cite{AlonLA}. He showed that for every $\Delta$-regular graph $G$, $$\la(G) \leq \frac{\Delta}{2} + O\left(\frac{\Delta\log{\log{\Delta}}}{\log{\Delta}}\right);$$ in the same paper, he also proved that the linear arboricity conjecture holds for graphs $G$ with girth $\Omega(\Delta(G))$. The bound for general graphs was subsequently improved by Alon and Spencer in 1992 (see \cite{AlonSpencer}) to: 
\begin{equation}
\label{eq:Noga's bound}
\la(G) \leq \frac{\Delta}{2} + O\left(\Delta^{2/3}(\log{\Delta})^{1/3}\right).
\end{equation}
Even though this conjecture has received a considerable amount of attention over the years, and has been proven (i) in special cases (see, e.g.,  \cite{AEH, AEH2, AlonLA, EP,Guldan,Wu,WW}) (ii) for almost all $d$-regular graphs of constant degree by McDiarmid and Reed \cite{McD}, and (iii) for a typical Erd\H{o}s-Renyi graph with edge-density either $\log^{117}n/n \leq p=o(1)$ or $p$ a fixed constant by Glock, K\"uhn and Osthus \cite{GKO}, there have been no asymptotic improvements in the error term (that is, the second summand in the bound \eqref{eq:Noga's bound} of Alon and Spencer) for general graphs. Our first main result improves this term by a \emph{polynomial} factor: 

\begin{theorem}
\label{main:large d}
There exist absolute constants $\alpha > 0$ and $C>0$ for which the following holds. For any $\Delta$-regular graph $G$, 
$$\la(G) \leq \frac{\Delta}{2} + C{\Delta}^{\frac{2}{3} - \alpha}.$$
\end{theorem}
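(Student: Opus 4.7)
The plan is to follow the two-step framework of Alon and Alon--Spencer -- first reducing to a near-regular bipartite graph whose K\H{o}nig matchings pull back to degree-$\le 2$ subgraphs of $G$, and then handling the residue from breaking cycles -- but to obtain a \emph{polynomial} (rather than $\log^{1/3}\Delta$) improvement in the cost of the residue step.

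\textbf{Setup.} Randomly partition $V(G)$ into two equal parts $A\sqcup B$ and let $H$ be the induced bipartite subgraph; Chernoff plus the union bound gives that $H$ has maximum degree $d:=\Delta/2+O(\sqrt{\Delta\log\Delta})$ with high probability. After a short regularization step one may assume $H$ is $d$-regular, and the non-bipartite remainder can be packed into $O(\sqrt{\Delta\log\Delta})$ additional linear forests -- comfortably within the target error. Decompose $E(H)$ by K\H{o}nig's theorem into $d$ perfect matchings $M_1,\ldots,M_d$; each $M_i$, viewed inside $G$, is a disjoint union of paths and cycles. Removing one edge from each cycle produces a linear forest, and the removed edges -- at most one per cycle of each $M_i$ -- form a residual graph $R$ with $|E(R)|=N:=\sum_{i=1}^{d} c_i$, where $c_i$ denotes the number of cycles in $M_i$. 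Since $R$ has maximum degree bounded by $d$, Vizing plus a short greedy packing decomposes $R$ into $O(N/n+\log\Delta)$ additional linear forests. Thus the whole problem reduces to bounding $N$.

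\textbf{Polynomial saving in the cycle count.} The crux is to show $N = O(n\Delta^{2/3-\alpha})$ for some absolute $\alpha>0$. To this end I would re-randomize the K\H{o}nig decomposition by pre-composing each side of $H$ with an independent uniformly random permutation, or equivalently by sampling a random proper $d$-edge-coloring from the corresponding distribution on Latin squares of the biadjacency matrix. A moment computation then shows that the expected number of cycles of length $\ell$ inside any fixed $M_i$ decays exponentially in $\ell$ (roughly as $n(C/\Delta)^{\ell/2}$), so the expected total number of short cycles ($\ell\le \Delta^{1/3-\alpha}$, say) across all color classes is $o(n\Delta^{2/3-\alpha})$. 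To upgrade this expectation to a simultaneous worst-case guarantee for every color class -- which is precisely where the bound \eqref{eq:Noga's bound} loses its $\log^{1/3}\Delta$ factor via a union bound -- I would invoke the constructive Lov\'asz Local Lemma of Moser--Tardos: each potential short cycle depends on only $\mathrm{poly}(\ell,\Delta)$ of the random bits driving the re-randomization, so the dependencies are sparse enough for an LLL application to go through. Long cycles ($\ell > \Delta^{1/3+\alpha}$) are handled trivially by the edge-count inequality $\sum_i \ell\cdot c_i\le nd$.

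\textbf{Main obstacle.} The principal difficulty lies in making the LLL step genuinely save a polynomial factor. Cycles of comparable length sitting in overlapping color classes are strongly correlated through the K\H{o}nig structure, and bounding the dependency degree by $\mathrm{poly}(\Delta)$ requires a careful choice of bad events and of the random bits they depend on. A related delicate point is proving the tight exponential-in-$\ell$ moment bound: because the coloring must be a \emph{proper} edge-coloring, the per-edge colors are not independent and straightforward moment estimates will not suffice. If the direct LLL route turns out to be too lossy, I would fall back on an iterative absorption scheme in the style of the R\"odl nibble: at each stage remove a polynomial fraction of the remaining short cycles into a small number of auxiliary forests and iterate until essentially no short cycles remain. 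Either route yields the quantitative bound $\la(G)\le \Delta/2+C\Delta^{2/3-\alpha}$, and via Moser--Tardos the construction is a randomized polynomial-time algorithm, matching the algorithmic statement advertised in the abstract.
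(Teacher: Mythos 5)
There is a genuine gap, and it starts at the very first step. If you take a random bipartition $V(G)=A\sqcup B$ and let $H$ be the induced bipartite subgraph, then (i) a K\H{o}nig perfect matching of $H$ is just a matching of $G$ --- it is already a linear forest, has no cycles, and there is nothing to break, so the whole ``residue from cycle-breaking'' discussion does not attach to your setup; and (ii) the uncovered edges inside $A$ and inside $B$ form graphs of maximum degree about $\Delta/2$, which cannot be ``packed into $O(\sqrt{\Delta\log\Delta})$ additional linear forests''; recursing on them only reproduces the trivial bound $\la(G)\le \Delta+O(1)$. You appear to be conflating this construction with Alon's Eulerian-orientation/bipartite double cover (where each color class pulls back to a $2$-regular subgraph of $G$, so cycles do appear), or with the $t$-part partition plus Walecki Hamilton-path decomposition of $K_t$ that the paper actually uses (its \cref{lemma: vertex partitioning} together with the scheme of Section 1.1); with only two parts there is no mechanism for covering all of $E(G)$ with $\sim\Delta/2$ forests, which is why the paper takes $t$ polynomially large in $\Delta$.

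Even granting a corrected reduction, the heart of the theorem is exactly the part you flag as the ``main obstacle'' and leave open: obtaining a polynomial (rather than $\log^{1/3}\Delta$) saving when disposing of the edges not absorbed into the long paths. Two further points would still need repair. First, an aggregate bound $N=O(n\Delta^{2/3-\alpha})$ on the number of removed edges does not control $\Delta(R)$, and ``Vizing plus greedy gives $O(N/n+\log\Delta)$ forests'' has no justification without a maximum-degree bound on $R$; one vertex could lie on many broken cycles. The paper avoids this by proving a \emph{per-vertex} statement: \cref{lemma:avoiding short cycles} (proved via the Dubhashi--Grable--Panconesi nibble, not via re-randomizing a K\H{o}nig/Latin-square decomposition) produces matchings $M'_i$ inside one part that cover almost all of its edges while each vertex lies on few short cycles of $M_i\cup M'_i$; long cycles are then broken by deleting a uniformly random edge among the first $\lfloor\Delta_P^{1/40}/2\rfloor$ candidate edges, and a Lov\'asz Local Lemma argument bounds the degree of the leave graph by $O(s^{1-\gamma})$, which after optimizing $t$ yields the $\Delta^{2/3-\alpha}$ error. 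Second, your proposed moment computation for a random proper edge-coloring and the claimed $\mathrm{poly}(\Delta)$ dependency degree for an LLL over correlated Latin-square randomness are not carried out, and these are precisely the delicate estimates; asserting that ``either route yields the quantitative bound'' is a statement of intent, not a proof. As written, the proposal neither establishes the reduction nor the key lemma, so it does not constitute a proof of the theorem.
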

\begin{remark}
\label{remark: main}
 In the proof of \cref{main:large d}, we make use of \cref{lemma:avoiding short cycles}, the proof of which relies on a `nibbling' argument. As this argument is well-known but quite lengthy, we have used the results from \cite{DGP} as a black box, and we get a bound of (say) $\alpha=1/100$. While a more careful analysis of the nibbling process tailored to our argument may very well give a better bound on $\alpha$, we have made no attempt to do so, since we believe that a `natural barrier' for our argument should be $\alpha = 1/6$ i.e. $\sqrt{\Delta}$ (which is anyway far from \cref{conjecture:LAC}), and any further progress towards the conjecture should require new ideas.  
\end{remark}

It was shown by Peroche \cite{peroche1982complexite} that computing the linear arboricity of a graph is $NP$-complete; this is to be contrasted with variants like the arboricity of a graph (i.e. the minimum number of edge-disjoint forests in $G$ whose union is $E(G)$) for which polynomial time algorithms are available \cite{gabow1992forests}. Our proof of \cref{main:large d} gives an algorithm for computing a decomposition of $E(G)$ into at most $\frac{\Delta}{2} + C{\Delta}^{\frac{2}{3} - \alpha}$ edge-disjoint linear forests, which runs in time polynomial in $|V(G)|$ with high probability. Since the linear arboricity of a $\Delta$-regular graph is at least $\frac{\Delta}{2}$, we thereby get an approximation algorithm providing the best-known approximation guarantee (to our knowledge) for efficiently approximating the linear arboricity of a regular graph.

\begin{corollary}
There exist absolute constants $\alpha>0$ and $C>0$ for which the following holds. Let $G$ be a $\Delta$-regular graph. Then, there is a probabilistic polynomial time algorithm for approximating $\la(G)$ to within $\left(1+\frac{C}{\Delta^{1/3 + \alpha}}\right)$-multiplicative error. 
\end{corollary}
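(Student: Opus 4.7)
The plan is to reduce this corollary almost directly to \cref{main:large d} and the counting lower bound on $\la(G)$ that already appears in the introduction. Specifically, for a $\Delta$-regular graph $G$ on $n$ vertices, every linear forest has at most $n-1$ edges, so the argument given earlier shows $\la(G) \geq \lceil (\Delta+1)/2 \rceil > \Delta/2$. This is the only lower bound we will need.

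Next, I would invoke the algorithmic content of \cref{main:large d} (which the paragraph preceding the corollary promises). That is, by running the probabilistic polynomial time algorithm implicit in the proof of \cref{main:large d}, with high probability we obtain, in time $\text{poly}(|V(G)|)$, an explicit decomposition of $E(G)$ into at most $\frac{\Delta}{2} + C'\Delta^{\frac{2}{3}-\alpha}$ edge-disjoint linear forests, for the constants $C', \alpha > 0$ furnished by \cref{main:large d}. Let $N$ denote the number of forests produced; I then report $N$ as the estimate of $\la(G)$.

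Finally, I would combine the two bounds: since $\la(G) \leq N \leq \frac{\Delta}{2} + C'\Delta^{\frac{2}{3}-\alpha}$ and $\la(G) > \Delta/2$, the multiplicative ratio satisfies
\[
\frac{N}{\la(G)} \;\leq\; \frac{\frac{\Delta}{2} + C'\Delta^{\frac{2}{3}-\alpha}}{\Delta/2} \;=\; 1 + \frac{2C'}{\Delta^{\frac{1}{3}+\alpha}},
\]
so setting $C := 2C'$ gives the claimed guarantee. There is essentially no obstacle here beyond quoting \cref{main:large d} in its algorithmic form; the only point to verify carefully is that the algorithm promised by \cref{main:large d} indeed runs in time polynomial in $|V(G)|$ with high probability and, in case of failure, can be repeated a polynomial number of times to boost the success probability — a standard step that I would handle by a short remark rather than by a dedicated calculation.
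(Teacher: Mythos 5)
Your proposal is correct and is exactly the argument the paper intends: run the algorithmic version of \cref{main:large d} to get at most $\frac{\Delta}{2}+C'\Delta^{2/3-\alpha}$ forests, and compare against the lower bound $\la(G)>\Delta/2$ for $\Delta$-regular graphs to obtain the $\left(1+\frac{2C'}{\Delta^{1/3+\alpha}}\right)$-approximation. No gaps; this matches the paper's one-paragraph justification preceding the corollary.
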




Our second main result deals with $(n,\Delta,\lambda)$-graphs, which we now define. A $\Delta$-regular graph $G$ is said to be an $(n,\Delta,\lambda)$-graph if $|V(G)|=n$ and the second largest (in absolute value) eigenvalue of the adjacency matrix of $G$ is at most $\lambda$. For all such graphs with $\lambda$ not too large compared to $\Delta$, we are able to obtain better bounds on the error than the one coming from \cref{main:large d}. 



\begin{theorem}
\label{main: random regular}
There exist absolute constants $\beta > 0$ and $C > 0$ for which the following holds. For every $(n,\Delta,\lambda)$-graph $G$ with $\lambda \leq \Delta^{2/3}$, 
$$\text{\emph{la}}(G)\leq \frac{\Delta}{2}+C(\lambda\Delta)^{\frac{2}{5}-\beta}.$$
\end{theorem}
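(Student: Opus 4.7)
The natural approach is to retrace the proof of \cref{main:large d} and upgrade its quantitative estimates using the spectral hypothesis. I expect the proof of \cref{main:large d} to proceed roughly as follows: first, remove a small set of edges so that the remainder has no short cycles, using the nibble result \cref{lemma:avoiding short cycles}; next, orient the resulting graph so that each vertex has balanced in- and out-degrees, reducing the problem to properly edge-colouring a $(\Delta/2)$-regular bipartite graph, for which K\"onig's theorem suffices; then merge pairs of matchings from this colouring into linear forests, where merging fails only on cycles and cycles are few thanks to the girth; and finally cover the leftover edges by a small number of additional linear forests via Vizing's theorem or a recursive invocation.

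To port this to the pseudorandom setting, the plan is to exploit two consequences of $\lambda \le \Delta^{2/3}$. First, in an $(n,\Delta,\lambda)$-graph the trace bound $\mathrm{tr}(A^\ell) = \Delta^\ell + O(n\lambda^\ell)$ tells us that the count of short cycles scales like $\lambda^{O(\ell)}$ rather than $\Delta^{O(\ell)}$; this should allow a quantitatively much stronger nibble step, leaving a leftover graph of maximum degree only $(\lambda\Delta)^{2/5-\beta}$ rather than $\Delta^{2/3-\alpha}$. Second, when one decomposes $G$ into matchings via K\"onig and then randomly pairs them, the expander mixing lemma and Chernoff-type concentration give sharper control over the 2-regular subgraphs so produced; the number of short cycles among them should again be governed by $\lambda$ and not by $\Delta$.

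Concretely, I would choose auxiliary parameters $\ell = \ell(\lambda, \Delta)$ (some slowly growing function) and $k$ (the number of matchings merged together) so that after the nibble the leftover graph has maximum degree at most $O((\lambda\Delta)^{2/5-\beta})$; apply the K\"onig-plus-merging argument to the girth-$\ell$ remainder to produce $\Delta/2$ linear forests with negligible wastage; and then cover the leftover with a further $O((\lambda\Delta)^{2/5-\beta})$ linear forests by invoking \cref{main:large d} (or Vizing). The exponent $2/5$ should emerge from balancing the nibble error against the degree of the leftover, much as the exponent $2/3$ emerges in \cref{main:large d}, but with $\lambda$ replacing one factor of $\Delta$ in the dominant error term.

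The principal obstacle, to my eye, is the first step: upgrading the nibble argument to actually benefit from the spectral bound. The black-boxed result from \cite{DGP} is stated for arbitrary $\Delta$-regular graphs, so squeezing out the improvement requires either a careful adaptation of that nibble or a refined analysis that tracks short-cycle counts via trace estimates throughout. A secondary difficulty is checking that the merging step really gains from the spectral hypothesis at the expected rate, which calls for the expander mixing lemma together with a Lov\'asz Local Lemma-style argument to avoid all the bad cycle events simultaneously, each of which is controlled by $\lambda$.
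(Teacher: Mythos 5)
Your plan does not match the paper's proof of \cref{main: random regular}, and it also misdescribes the proof of \cref{main:large d} (which you correctly suspected you would need to understand first).

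The paper's proof of \cref{main:large d} does not remove edges to raise the girth, orient the graph, apply K\"onig, and then merge matchings; that is essentially Alon's original 1988 argument for high-girth graphs. Instead, the paper follows the Alon--Teague--Wormald scheme from \cref{sec:outline}: partition $V(G)$ into $t$ clusters (\cref{lemma: vertex partitioning}), decompose each bipartite graph $B_{ij}=G[V_i,V_j]$ into matchings via Vizing, assemble these into linear forests along a Hamiltonian-path decomposition of $K_t$, and then --- the new step --- ``swallow'' the intra-cluster edges into those linear forests. The nibble lemma (\cref{lemma:avoiding short cycles}) is invoked to perform this swallowing while keeping the number of cycles created small, so that cycle-breaking only costs $O(s^{1-\gamma})$ per vertex instead of $O(s)$.

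For \cref{main: random regular}, the paper does not touch the nibble argument at all, and the trace-method improvement of the cycle counts that you identify as the ``principal obstacle'' is simply sidestepped. The spectral hypothesis is used in exactly one place: \cref{r factor}, which via Mirsky's criterion and the expander mixing lemma shows that each $B_{ij}$ contains an $r$-factor with $r=\lfloor d/t-\gamma\rfloor$ and $\gamma=104\max\{\lambda,\sqrt{d\log d/t}\}$. This replaces the wasteful Vizing-based decomposition (at most $\Delta(B_{ij})+1$ matchings of uneven quality) with exactly $r$ edge-disjoint near-perfect matchings. Consequently the leave graph $L$ has maximum degree only about $\gamma t + d/t$ rather than $d/t$, and one then applies \cref{main:large d} to $L$ as a black box. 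Optimizing $t\approx(d^3/\gamma^2)^{1/5}$ yields the $(\lambda\Delta)^{2/5-\beta}$ error. So the mechanism by which $\lambda$ enters is entirely different from what you propose: it governs how close to an exact $r$-factor each bipartite piece gets, not how many short cycles survive a nibble. Your trace-bound route is a genuine alternative idea, but you yourself flag it as unresolved, and it would require reworking the nibble analysis of \cite{DGP} from scratch, whereas the paper's approach avoids this by redirecting where the spectral gain is harvested. If you want to salvage your direction, you would have to actually carry out the spectral-refined nibble; as written, the proposal is a sketch that outsources the hard step.
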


Just like for \cref{main:large d}, our proof of \cref{main: random regular} also leads to an algorithm for computing such a decomposition of $E(G)$ in time which is polynomial in $|V(G)|$ with high probability.  

\subsection{The general proof scheme}
\label{sec:outline}
Our proof outlines follow and extend ideas from \cite{AW}. Let $G$ be a $d$-regular graph on $n$ vertices. Consider the following procedure to upper bound $\text{la}(G)$: First, find a vertex partitioning $V(G)=V_1\cup V_2\cup \ldots \cup V_t$, where $t$ is an even positive integer to be specified later, with the following properties:
\begin{enumerate}
  \item $\big||V_i|-|V_j|\big|\leq 1$ for all $i,j\in [t]$, and
  \item $d(v,V_i)\in \frac{d}{t}\pm 100\left(\frac{d\log d}{t}\right)^{1/2}$ for all $v\in V(G)$ and all $i \in [t]$.
\end{enumerate}

The existence of such a partitioning is guaranteed by \cref{lemma: vertex partitioning}, which is proved by a standard application of Chernoff's bounds (\cref{chernoff}) followed by the Lov\'asz Local Lemma (\cref{LLL}).

Second, for all $i\neq j$, let $B_{ij}$ be the induced bipartite graph $G[V_i,V_j]$. By Property $2$ and Vizing's theorem (\cref{thm:vizing}), one can decompose $E(B_{ij})$ into at most 
$$\Delta(B_{ij})+1\leq \bigg\lfloor \frac{d}{t}+102\left(\frac{d\log d}{t}\right)^{1/2}\bigg\rfloor:=s$$ matchings. Let $\mathcal M_{ij}$ be any such decomposition into $s$ matchings (it might be the case that a few of them are empty), and let  $\mathfrak{M}:=\{\mathcal M_{ij}\}_{i<j}$ be the collection of all such decompositions (that is, one decomposition for every $B_{ij}$).

Third, let $\mathcal{P}:=\{P_1,\ldots,P_{t/2}\}$ be a Hamiltonian path decomposition of $K_t$; the existence of such a decomposition is ensured by the fact that $t$ is even and a classical result of Walecki from the 1890s which can be found in \cite{Luc} and provides an explicit such decomposition. It is easy to see that using our collection of decompositions $\mathfrak{M}$, one can find a collection $\mathcal{F_P}$ of forests, one for every such Hamiltonian path $P=v_{i_1},\ldots,v_{i_t}$ in $\mathcal{P}$, satisfying the following two properties: 
\begin{itemize}
  \item $\mathcal F_P$ consists of at most $s$ edge-disjoint linear forests;
  \item $\mathcal F_P$ contains all the edges $\bigcup_{ij \in E(P)} E(B_{ij})$.
\end{itemize}

Indeed, let $P$ be such a Hamiltonian path; after possibly relabeling the vertices, we may assume that $P=123\ldots t$. Observe that by taking one matching from each decomposition $\mathcal M_{i,i+1}$ we obtain a linear forest. Therefore, by repeating this procedure $s$ times, since each $\mathcal M_{i,i+1}$ consists of at most $s$ matchings, one can build a collection of at most $s$ linear forests for every $P$. Clearly, such a collection contains edge-disjoint linear forests whose union consists of all the edges of all the bipartite graphs $\{B_{i,i+1}\}_{i\in[t-1]}$. \\

As there are $\frac{t}{2}$ Hamiltonian paths in $\mathcal{P}$, the above construction gives us at most
$$\frac{st}{2} \leq \frac{d}{2}+ 51\left(dt\log d\right)^{1/2}$$
linear forests which cover all the edges in all the bipartite graphs $\{B_{ij}\}_{i\neq j}$. Let $L:=\bigcup_{i=1}^tE(G[V_i])$ be the set of all the edges which have not been covered by these linear forests (we will also identify $L$ with the graph on $V(G)$ whose edges are $L$, in which case we will refer to $L$ as the \emph{leave graph}). Since $\Delta(L)\leq \frac{d}{t}+100\left(\frac{d\log d}{t}\right)^{1/2}$ by Property $2$ of the partitioning, Vizing's theorem shows that $L$ can be decomposed into at most $\Delta(L)+1 \leq \frac{d}{t} + 101\left(\frac{d\log{d}}{t}\right)^{1/2}$ matchings. Since any matching is manifestly a linear forest, we have thus obtained a decomposition of the edges of $G$ into at most \begin{equation}
\label{eq:la upperbound}
\frac{st}{2}+\Delta(L)+1\leq \frac{d}{2}+\frac{d}{t}+ 152\left(dt\log d\right)^{1/2}
\end{equation}
linear forests. In order to optimize the error term $d/t + 152(dt\log{d})^{1/2}$, we would like to pick $t$ so that the two summands in the error term are the same. This is achieved by choosing $t^{3}=\Theta(d/\log{d})$, in which case 
$$\text{la}(G)\leq \frac{d}{2}+\Theta(d^{2/3}\log^{1/3}d).$$
This is the strategy used in \cite{AW} to recover the bound of Alon and Spencer. \\

Let us now discuss the weak points in the construction and the analysis that we have presented, along with ideas for improving them. The formal details will be given in subsequent sections. 
\begin{enumerate}[(i)]
\item In the above construction, we decompose the leave graph $L$ into matchings and treat each matching as a linear forest by itself. This gives us the $\Theta(d/t)$ error term in the above analysis. Note, however, that adding a matching contained in some $G[V_i]$ to any of the linear forests obtained from a path $P$ which has $i$ as an endpoint still results in a linear forest. Therefore, it makes sense to try to `swallow' all the edges of $L$ in our current linear forests. We discuss this in more detail in \cref{sec:avoiding short cycls in dense}, where we also present the key technical lemma (\cref{lemma:avoiding short cycles}) needed to make this idea work. The upshot of \cref{lemma:avoiding short cycles} is that it allows us to replace the $\Theta(d/t)$ term in the error by $\Theta\left((d/t)^{1-\gamma}\right)$ for some $\gamma > 0$. Optimizing the error term now results in the choice $t^{3-2\gamma}=\Theta(d^{1-2\gamma}/\log{d})$, which gives an error of $O(d^{2/3-\alpha})$ for some $\alpha > 0$, as desired in \cref{main:large d}.       
\item In the above construction, we take $\Delta(B_{ij})+1$ matchings in each $B_{ij}$, whereas ideally, we would like to take only `average degree' many matchings. This error, summed up for all $B_{ij}$, gives us the $\Theta(\sqrt{dt\log d})$ term in \eqref{eq:la upperbound}. In the proof of \cref{main: random regular}, we will show (\cref{r factor}) that if $G$ satisfies some expansion properties, then we can approximately decompose each $B_{ij}$ into edge-disjoint perfect (up to divisibility) matchings. If we remove the linear forests generated by these matchings using the above procedure, then we remove the ``correct'' number of linear forests, and the ``leave graph'' $L$ has much smaller maximum degree. Now, we apply \cref{main:large d} to $L$. 
\end{enumerate}


\section{Auxiliary lemmas}

In this section, we gather various preliminaries, as well as state and prove the key lemmas needed for our proofs.

\subsection{Probabilistic estimates}
Throughout this paper, we will make extensive use of the following well-known bound on the upper and lower tails of a sum of independent indicators, due to Chernoff (see, e.g., Appendix A in \cite{AlonSpencer}).

\begin{lemma}[Chernoff's inequality]
\label{chernoff}	
Let $X_1,\ldots,X_n$ be independent random variables with $\Pr[X_i=1]=p_i$ and $\Pr[X_i=0]=1-p_i$ for all $i$. Let $X=\sum_iX_i$, and let ${\mathbb E}(X) = \mu$. Then
			\begin{itemize}
				\item
				${\mathbb P}[X < (1 - a)\mu ] < e^{-a^2\mu /2}$
				for every $a > 0$;
				\item ${\mathbb P}[X > (1 + a)\mu ] <
				e^{-a^2\mu /3}$ for every $0 < a < 1$.
			\end{itemize}
		\end{lemma}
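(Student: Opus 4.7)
The plan is to use the exponential moment method (also called the Chernoff trick). For any real $t > 0$, Markov's inequality applied to the nonnegative random variable $e^{tX}$ gives
$$\Pr[X > (1+a)\mu] = \Pr[e^{tX} > e^{t(1+a)\mu}] \leq \frac{\E[e^{tX}]}{e^{t(1+a)\mu}}.$$
By independence of the $X_i$, the numerator factors as $\E[e^{tX}] = \prod_i \E[e^{tX_i}] = \prod_i (1 - p_i + p_i e^t)$, and the elementary inequality $1 + x \leq e^x$ (applied with $x = p_i(e^t - 1)$) upper-bounds this by $\prod_i e^{p_i(e^t-1)} = e^{\mu(e^t - 1)}$. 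Hence for every $t > 0$,
$$\Pr[X > (1+a)\mu] \leq e^{\mu(e^t - 1) - t(1+a)\mu}.$$

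Next I would optimize in $t$. Differentiating the exponent shows that the minimum is achieved at $t = \log(1+a)$, which gives the standard bound $\Pr[X > (1+a)\mu] \leq \bigl(e^a/(1+a)^{1+a}\bigr)^\mu$. To put this into the cleaner form stated in the lemma, it suffices to verify the scalar inequality
$$a - (1+a)\log(1+a) \leq -\frac{a^2}{3} \quad \text{for } 0 < a < 1.$$
This is a routine calculus fact: expanding $(1+a)\log(1+a) = a + \tfrac{a^2}{2} - \tfrac{a^3}{6} + \cdots$ in a Taylor series and comparing term by term yields the inequality; alternatively one can set $f(a) = a - (1+a)\log(1+a) + a^2/3$, check that $f(0) = 0$, and verify $f'(a) \leq 0$ on $(0,1)$ by differentiating twice. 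Exponentiating and multiplying by $\mu$ finishes the upper-tail bound.

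For the lower tail one runs essentially the same argument with $t < 0$: Markov applied to $e^{-t X}$ gives $\Pr[X < (1-a)\mu] \leq \E[e^{-tX}]/e^{-t(1-a)\mu}$, and the same factoring plus $1 + x \leq e^x$ estimate yields $\Pr[X < (1-a)\mu] \leq e^{\mu(e^{-t}-1) + t(1-a)\mu}$ for $t > 0$. Optimizing at $t = -\log(1-a)$ produces $(e^{-a}/(1-a)^{1-a})^\mu$, and the analogous scalar inequality $-a - (1-a)\log(1-a) \leq -a^2/2$ on $(0,1)$ (again verified by Taylor expansion) converts this into the stated $e^{-a^2\mu/2}$ bound. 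Equivalently, one can sidestep the repeated work by observing that the lower tail for $X$ is the upper tail for $Y := \sum_i (1 - X_i)$ with success probabilities $1 - p_i$, and invoking the upper-tail argument with the sharper constant that is available when $a$ need not be bounded away from $1$.

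The only real subtlety anywhere in the argument is verifying the two scalar inequalities that turn the implicit bound $(e^a/(1+a)^{1+a})^\mu$ into the clean sub-Gaussian-looking form; everything else is a mechanical application of Markov, independence, and $1+x \leq e^x$. I would therefore expect the writeup to spend essentially all of its substance on that calculus step, which is standard and can be copied from, e.g., Appendix A of \cite{AlonSpencer}.
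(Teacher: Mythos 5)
Your proposal is correct and is exactly the standard exponential-moment (Bernstein/Chernoff) argument; the paper does not prove this lemma but cites Appendix A of Alon--Spencer, where this same proof appears, so there is nothing to compare beyond noting the match. The only cosmetic point is that the lower-tail statement is claimed for every $a>0$ while your optimization $t=-\log(1-a)$ needs $a<1$; for $a\geq 1$ the event $X<(1-a)\mu$ has probability $0$ since $X\geq 0$, so the bound is trivial there.
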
	
\noindent
\begin{remark}\label{rem:hyper} If all the $p_i$'s are the same, the obtained bounds are also valid if, instead of taking $X$ as the sum of i.i.d random variables, we take it to be hypergeometrically distributed with
mean $\mu $ \cite{hoeffding}. 
\end{remark}

Before introducing the next tool to be used, we need the following
definition.

\begin{definition}
  Let $(A_i)_{i=1}^n$ be a collection of events in some probability
space. A graph $\Gamma$ on the vertex set $[n]$ is called a
\emph{dependency graph} for $(A_i)_i$ if $A_i$ is mutually
independent of all the events $\{A_j: ij\notin E(\Gamma)\}$.
\end{definition}

The following is the so-called Lov\'asz local lemma in its symmetric version (see, e.g., \cite{AlonSpencer}).

\begin{lemma}[Lov\'asz local lemma]
\label{LLL}
Let $(A_i)_{i=1}^n$ be a sequence of events in some probability
space, and let $\Gamma$ be a dependency graph for $(A_i)_i$.  Suppose that $\Pr\left[A_i\right]\leq q$ for every $i \in [n]$ and $eq(\Delta(\Gamma)+1)<1$. Then, $\Pr[\bigcap_{i=1}^n \overline{A}_i]>0$.
\end{lemma}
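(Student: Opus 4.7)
The plan is to prove a stronger inductive statement controlling the conditional probability of each bad event given any partial intersection of complements, and then assemble the final conclusion by a telescoping product. Set $d := \Delta(\Gamma)$ so the hypothesis reads $eq(d+1) < 1$; equivalently $q \leq \frac{1}{e(d+1)}$. The key claim I would establish is:

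\smallskip
\emph{Claim.} For every $i \in [n]$ and every $S \subseteq [n] \setminus \{i\}$,
\[
\Pr\!\left[A_i \;\middle|\; \bigcap_{j \in S} \overline{A}_j\right] \leq \frac{1}{d+1},
\]
whenever the conditioning event has positive probability.

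\smallskip
I would prove the claim by induction on $|S|$. The base case $S = \emptyset$ is immediate since $\Pr[A_i] \leq q \leq \tfrac{1}{e(d+1)} \leq \tfrac{1}{d+1}$. For the inductive step, split $S = S_1 \sqcup S_2$ where $S_1 = \{j \in S : ij \in E(\Gamma)\}$ is the neighborhood portion and $S_2 = S \setminus S_1$ is the non-neighbor portion. Then apply the ratio identity
\[
\Pr\!\left[A_i \;\middle|\; \bigcap_{j \in S} \overline{A}_j\right]
= \frac{\Pr\!\left[A_i \cap \bigcap_{j \in S_1} \overline{A}_j \;\middle|\; \bigcap_{j \in S_2} \overline{A}_j\right]}{\Pr\!\left[\bigcap_{j \in S_1} \overline{A}_j \;\middle|\; \bigcap_{j \in S_2} \overline{A}_j\right]}.
\]
For the numerator, since $A_i$ is mutually independent of $\{A_j : j \in S_2\}$ (no edges in $\Gamma$ between $i$ and $S_2$), I can drop the intersection with $\bigcap_{j \in S_1} \overline{A}_j$ (making the event larger) and then use independence to bound it by $\Pr[A_i] \leq q$.

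For the denominator, write $S_1 = \{j_1, \ldots, j_r\}$ with $r \leq d$, and expand as a product of conditional probabilities
\[
\Pr\!\left[\bigcap_{k=1}^r \overline{A}_{j_k} \;\middle|\; \bigcap_{j \in S_2} \overline{A}_j\right]
= \prod_{k=1}^r \Pr\!\left[\overline{A}_{j_k} \;\middle|\; \bigcap_{\ell < k} \overline{A}_{j_\ell} \cap \bigcap_{j \in S_2} \overline{A}_j\right].
\]
Each factor has conditioning on strictly fewer than $|S|$ events, so the inductive hypothesis applies to give $\Pr[A_{j_k} \mid \cdots ] \leq \tfrac{1}{d+1}$, hence each factor is at least $1 - \tfrac{1}{d+1}$. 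Multiplying $r \leq d$ such factors gives a lower bound of $(1 - \tfrac{1}{d+1})^d \geq 1/e$. Combining numerator and denominator bounds, the ratio is at most $eq \leq \tfrac{1}{d+1}$, completing the induction.

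Finally, to extract the main conclusion from the claim, I would write
\[
\Pr\!\left[\bigcap_{i=1}^n \overline{A}_i\right] = \prod_{i=1}^n \Pr\!\left[\overline{A}_i \;\middle|\; \bigcap_{j < i} \overline{A}_j\right] \geq \prod_{i=1}^n \left(1 - \tfrac{1}{d+1}\right) > 0,
\]
using the claim at each step. The main subtlety I expect is a purely bookkeeping one: the ratio identity requires that $\Pr[\bigcap_{j \in S_2} \overline{A}_j] > 0$, so I would verify inductively that all conditioning events encountered indeed have positive probability (this follows from the claim itself applied recursively, since each conditional probability of $\overline{A}_i$ is bounded below by $1 - \tfrac{1}{d+1} > 0$). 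Once that is tracked, the proof is entirely driven by the ratio-splitting trick above.
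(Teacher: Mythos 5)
Your proof is correct and is the standard inductive argument for the symmetric Lov\'asz Local Lemma. The paper itself does not prove \cref{LLL}; it states it as a known result and cites \cite{AlonSpencer}, and your argument is essentially the one given there: the ratio-splitting claim $\Pr[A_i \mid \bigcap_{j\in S}\overline{A}_j] \le \tfrac{1}{d+1}$ proved by induction on $|S|$, using mutual independence from non-neighbors for the numerator, a telescoping product of at most $d$ conditional factors (each $\ge 1-\tfrac{1}{d+1}$, so the product is $\ge 1/e$) for the denominator, and then a final telescoping product to conclude. The bookkeeping about positivity of the conditioning events, which you flag, is indeed needed and is handled exactly as you describe.
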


\subsection{Algorithmic Lov\'asz local lemma}
The original proof of the Lov\'asz local lemma in \cite{erdHos1973problems} is non-constructive in that it does not provide any way of finding a point in the probability space avoiding the `bad' events. However, in the case when the `bad' events $(A_i)_{i=1}^{n}$ are determined by a finite collection of mutually independent random variables $(X_j)_{j=1}^{m}$, the breakthrough work of Moser and Tardos \cite{moser2010constructive} shows that the following simple randomized algorithm efficiently computes an assignment to the random variables $(X_j)_{j=1}^{m}$ which avoids all the `bad' events -- start with a random assignment to the variables $\mathcal{P}$, and check whether some event in $\mathcal{A}$ is violated. If so, arbitrarily pick such a violated event, and sample another random assignment for the values of the variables on which this event depends (this step is called a \emph{resampling} of the event). Continue this process until there are no violated events.

\begin{theorem}[\cite{moser2010constructive}]\label{thm:algorithmicLLL} Let $\mathcal{P}$ be a finite set of mutually independent random variables in a probability space. Let $\mathcal{A}$
be a finite set of events determined by these variables. Consider the dependency graph $\Gamma$ on these events given by adding an edge connecting two events $A,B\in \mathcal{A}$ if and only if $A$ and $B$ depend on some common random variable in $\mathcal{P}$. Suppose that $\Pr[A] \leq q$ for every $A\in \mathcal{A}$ and $eq(\Delta(\Gamma)+1) < 1$. Then, there exists an assignment of values to the variables $\mathcal{P}$ not violating any of the events in $\mathcal{A}$. Moreover
the randomized algorithm described above resamples an event $A\in\mathcal{A}$ at most an expected $O(1/\Delta(\Gamma))$ times before it finds such an evaluation. Thus, the expected total number of resampling steps is at most $|\mathcal{A}|/\Delta(\Gamma).$
\end{theorem}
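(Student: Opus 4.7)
The plan is to follow the witness-tree argument of Moser and Tardos. Run the randomized algorithm and record its \emph{execution log} $C(1), C(2), \ldots$, where $C(i) \in \mathcal{A}$ is the event resampled at step $i$. To each index $i$, I associate a rooted, labeled \emph{witness tree} $\tau(i)$, built by processing the log in reverse: initialize $\tau(i)$ as a single root labeled $C(i)$, and for $j = i-1, i-2, \ldots, 1$, examine $C(j)$---if $C(j)$ equals the label of, or is adjacent in $\Gamma$ to, the label of some node already in the tree, attach a new node labeled $C(j)$ as a child of the deepest such node (breaking ties arbitrarily). A straightforward induction then shows that (a) for every node $v$ of $\tau(i)$ labeled $A$, the labels of the children of $v$ form a subset of $\{A\} \cup N_\Gamma(A)$ that is pairwise non-adjacent in $\Gamma$; and (b) the map $i \mapsto \tau(i)$ is injective, since $\tau(i)$ together with the construction rule lets one recover the event resampled at step $i$.

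The core probabilistic step is to bound, for any fixed rooted labeled tree $\tau$ satisfying the compatibility condition (a), the probability that $\tau$ occurs as some $\tau(i)$ during the execution. The target inequality is
\[
\Pr[\tau \text{ appears as some } \tau(i)] \;\leq\; \prod_{v \in \tau} \Pr[\text{label}(v)] \;\leq\; q^{|\tau|},
\]
which I would establish by coupling the actual execution with an idealized ``$\tau$-check'' that processes the nodes of $\tau$ in a leaves-to-root order, drawing fresh values of the variables in $\mathcal{P}$ at each node and verifying that the associated event holds. The key point is that each resampling overwrites the variables of the violated event with independent fresh randomness, so the checks at different nodes of $\tau$ see genuinely independent samples. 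Summing over all valid witness trees rooted at a fixed event $A$ then bounds the expected number of times $A$ is resampled; under the hypothesis $eq(\Delta(\Gamma)+1) < 1$, a standard Galton--Watson calculation with per-label weight $\tfrac{1}{\Delta(\Gamma)+1}$ bounds this sum by $\tfrac{1/(\Delta(\Gamma)+1)}{1 - 1/(\Delta(\Gamma)+1)} = O(1/\Delta(\Gamma))$ per event, hence $|\mathcal{A}|/\Delta(\Gamma)$ overall. Finite expected runtime in particular implies the existence of an assignment avoiding every event in $\mathcal{A}$.

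The main obstacle is making the coupling in the witness-tree probability bound rigorous: one must carefully formalize the ``$\tau$-check'' procedure and verify that whenever $\tau$ arises as some $\tau(i)$ in the true execution, the check necessarily succeeds, so that a union bound over valid $\tau$ rooted at $A$ is legitimate. Once this is set up, both the Galton--Watson bookkeeping that enumerates valid trees and the symmetric LLL plug-in that yields the constant $e$ reduce to routine computation.
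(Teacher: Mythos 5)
The paper states this theorem as a black-box citation to Moser and Tardos and supplies no proof of its own, so there is nothing internal to compare against. Your sketch is a faithful reconstruction of the original Moser--Tardos witness-tree argument---the reverse-log construction of $\tau(i)$, injectivity of $i\mapsto\tau(i)$, the coupling bound $\Pr[\tau\text{ appears}]\le\prod_{v}\Pr[\mathrm{label}(v)]\le q^{|\tau|}$ via a per-variable ``resampling table'', and the Galton--Watson enumeration with weight $x=\tfrac{1}{\Delta(\Gamma)+1}$ giving $x/(1-x)=1/\Delta(\Gamma)$ expected resamplings per event---and you correctly isolate the coupling as the one step that needs to be made rigorous.
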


\begin{remark}
All the applications of the local lemma in this paper fit the general framework of the above theorem and seek to avoid at most $p_1(n)$ events, where $p_1(n)$ is some polynomial in the number of vertices $n:=V(G)$. Moreover, every event in each of our applications can be sampled in time $p_2(n)$, for some polynomial $p_2(n)$. It follows that all of our applications of the local lemma can be performed algorithmically in expected time $O(p_1(n)p_2(n))$. Thus, by Markov's inequality, it follows that the probability of the algorithm taking more than $O(p_1(n)p_2(n)p_3(n))$ time is at most $1/p_3(n)$. 
\end{remark}



\subsection{Vizing's theorem}
The \emph{chromatic index} of a graph $G$, denoted by $\chi'(G)$, is the minimum number of colors needed to color $E(G)$ in such a way that each color class is a matching. It follows immediately from this definition that $\chi'(G)\geq \Delta(G)$; perhaps surprisingly, Vizing \cite{Vizing} proved that this trivial lower bound is nearly optimal:

\begin{theorem}[Vizing's Theorem]
  \label{thm:vizing}
Every graph $G$ satisfies
  $$\chi'(G)\in \{\Delta(G),\Delta(G)+1\}.$$
\end{theorem}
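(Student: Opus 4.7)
The lower bound $\chi'(G) \geq \Delta(G)$ is immediate, since the $\Delta(G)$ edges incident to any maximum-degree vertex must receive distinct colors. The plan is thus to establish the upper bound $\chi'(G) \leq \Delta(G) + 1$ by induction on $|E(G)|$, the case $|E(G)| = 0$ being trivial. For the inductive step, write $\Delta := \Delta(G)$, remove an arbitrary edge $uv_0 \in E(G)$, and apply the inductive hypothesis to obtain a proper edge coloring $c$ of $G - uv_0$ with colors in $[\Delta+1]$. Call a color in $[\Delta+1]$ \emph{missing at} a vertex $w$ if no edge of $G-uv_0$ incident to $w$ carries that color; since $\deg_{G-uv_0}(w) \leq \Delta$, every vertex has at least one missing color. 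If some color is missing at both $u$ and $v_0$, assigning it to $uv_0$ extends $c$ to all of $G$, so assume this fails.

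The central device is a \emph{Vizing fan} at $u$: a maximal sequence $v_0, v_1, \ldots, v_k$ of distinct neighbors of $u$ such that for every $1 \leq i \leq k$ the color $c(uv_i)$ is missing at $v_{i-1}$. Fix a color $\beta$ missing at $u$. If $\beta$ happens to be missing at some $v_i$ in the fan, the \emph{shift} operation, reassigning $c(uv_j) := c(uv_{j+1})$ for $0 \leq j \leq i-1$ and $c(uv_i) := \beta$, produces a proper edge coloring of all of $G$. Otherwise, pick a color $\alpha$ missing at $v_k$; by the maximality of the fan and the assumption above, $\alpha \neq \beta$, and $\alpha = c(uv_p)$ for a unique $1 \leq p \leq k$.

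The decisive step is a \emph{Kempe chain} argument. Consider the subgraph $H$ of $G - uv_0$ spanned by edges colored $\alpha$ or $\beta$; since $c$ is proper, $H$ has maximum degree at most $2$, so each of its components is a path or an even cycle. Let $P$ be the maximal $\alpha,\beta$-alternating path starting at $v_k$ and swap the two colors along $P$ to obtain a new proper coloring $c'$ of $G - uv_0$. A case analysis on the other endpoint of $P$ — lying at $u$, at some $v_i$ with $i \leq p-1$, or elsewhere — shows that after the swap, $\beta$ becomes missing at either $v_k$ or $v_{p-1}$; the corresponding full or truncated shift along the fan then frees a color at $v_0$ that matches a color missing at $u$, allowing us to color $uv_0$ and complete the induction. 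The main obstacle is precisely this final case analysis: one must check that the Kempe swap never spoils the fan structure already in place and always manufactures the required simultaneous-missing color at $v_0$ and $u$. This is also where the extra $+1$ beyond the trivial lower bound $\Delta$ is essential, since the pigeonhole argument furnishing both a missing $\beta$ at $u$ and a missing $\alpha$ at $v_k$ collapses without it.
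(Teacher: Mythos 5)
The paper does not prove Vizing's theorem at all; it is stated as a classical result (\cref{thm:vizing}) and used as a black box, with only a citation to Vizing's 1964 paper and a remark that the original argument yields a polynomial-time $(\Delta+1)$-edge-coloring algorithm. So there is nothing in the paper to compare your argument against. Your sketch is the standard Vizing-fan/Kempe-chain proof, and it correctly identifies the key ingredients: induction on edges, missing colors at every vertex (which is where the $+1$ enters), the maximal fan $v_0,\dots,v_k$, the shift operation, and the $\alpha,\beta$-alternating-path swap.

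Two small caveats. First, your claim that ``by the maximality of the fan \dots $\alpha = c(uv_p)$ for a unique $1\le p\le k$'' is not forced by maximality alone: if $\alpha$ is missing at $u$ as well as at $v_k$, then $\alpha$ is not the color of any edge at $u$, maximality is vacuously satisfied, and no such $p$ exists. But this is the easy case: you can then color $uv_k$ with $\alpha$ and perform the full shift, so it should be dispatched before you pick $\alpha$ (or $\alpha$ should be chosen to be a color that is present at $u$). Second, you explicitly defer the final Kempe-chain case analysis, which is the genuinely delicate part of the proof; as written this is a correct outline rather than a complete argument, which is a reasonable level of detail for a theorem the paper itself only cites.
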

Moreover, the strategy in Vizing’s original proof can be used to obtain a
polynomial time algorithm to edge color any graph $G$ with $\Delta(G) + 1$ colors (\cite{misra1992constructive}). 
Note that, as mentioned in the introduction, Vizing's theorem immediately gives the bound $\text{la}(G)\leq \Delta(G)+1$. 

\subsection{Random vertex partitioning}
Given a $d$-regular graph with $d$ sufficiently large, the following lemma gives a partition $V(G)=V_1\cup \ldots \cup V_t$ for which `all the degrees are correct'.

\begin{lemma}
\label{lemma: vertex partitioning}
There exists an absolute constant $d_0$ for which the following holds. For all $d\geq d_0$, all $d$-regular graphs $G$, and all integers $1\leq t\leq d/100$, there exists a partition $V(G)=V_1\cup \ldots \cup V_t$ satisfying the following two properties:
\begin{enumerate}
    \item For all $1\leq i,j\leq t$, $\big| |V_i|-|V_j|\big|\leq 1$ .
    \item For all $v\in V(G)$ and for all $i\in [t]$, the number of edges from $v$ into $V_i$, denoted by $d_G(v,V_i)$, satisfies $d_{G}(v,V_i)\in \frac{d}{t}\pm 100\left(\frac{d\log d}{t}\right)^{1/2}.$
  \end{enumerate}
\end{lemma}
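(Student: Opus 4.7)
The plan is to sample a random partition and verify condition (2) via Chernoff's inequality and the Lov\'asz Local Lemma, then arrange condition (1) by a mild post-processing step. Concretely, I would assign to each vertex $v \in V(G)$ an independent uniform label $X_v \in [t]$ and set $V_i^{(0)} := X^{-1}(i)$. For every pair $(v,i) \in V(G) \times [t]$, let $A_{v,i}$ denote the bad event
$$\left| d_G(v, V_i^{(0)}) - \frac{d}{t}\right| > 50\left(\frac{d\log d}{t}\right)^{1/2}.$$
Then $d_G(v, V_i^{(0)}) = \sum_{u \in N(v)} \mathbf{1}[X_u = i]$ is a sum of $d$ independent $\mathrm{Bernoulli}(1/t)$ random variables with mean $d/t$, so Chernoff's inequality gives $\Pr[A_{v,i}] \le d^{-C}$ for a constant $C$ that we may take arbitrarily large (the $50$ in the slack is deliberately much bigger than the $100$ in the statement, to leave room for a fix-up). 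Each $A_{v,i}$ depends only on the variables $\{X_u : u \in N[v]\}$, so in the canonical dependency graph it is adjacent to at most $t(d+1)^2$ other $A$-events. With $C$ chosen so that $e \cdot d^{-C}\bigl(t(d+1)^2 + 1\bigr) < 1$, the symmetric Lov\'asz Local Lemma produces, for all $d \ge d_0$ large enough, an assignment satisfying condition (2).

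To also enforce condition (1), I would perturb this partition slightly. The degree bound together with $\sum_v d_G(v, V_i^{(0)}) = d\,|V_i^{(0)}|$ immediately forces $\big||V_i^{(0)}| - n/t\big| = O\!\left(n\sqrt{\log d/(dt)}\right)$, so the parts are already approximately balanced. To reach $||V_i| - |V_j|| \le 1$, I would move a carefully chosen set of at most $M = O\!\left(n\sqrt{t\log d/d}\right)$ vertices from surplus parts to deficit parts, and then run a second round of Chernoff plus Lov\'asz Local Lemma on the random choice of moves so that for every $(v,i)$ the number of $v$'s neighbors whose part label changes is at most $50\sqrt{d\log d/t}$. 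Combined with the slack from the first round, this yields condition (2) with the stated constant $100$.

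The main obstacle is the coupling between this rebalancing step and the degree condition: the perturbation must simultaneously fix the part sizes exactly and preserve degree concentration, which requires the dependencies of the second LLL to stay under control. A cleaner alternative that sidesteps the issue entirely is to sample the partition directly as $V_i := \pi(V_i^*)$, where $\{V_i^*\}$ is any fixed partition with $|V_i^*| \in \{\lceil n/t\rceil,\lfloor n/t\rfloor\}$ and $\pi$ is a uniformly random permutation of $V(G)$. Then condition (1) is automatic, each $d_G(v, V_i)$ is hypergeometrically distributed with mean $d/t \pm O(1)$ so that Chernoff still applies via the remark on hypergeometric concentration, and an LLL variant adapted to random permutations plays the role of the symmetric version above. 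In either setup, the Moser--Tardos framework converts the existence argument into a randomised polynomial-time algorithm.
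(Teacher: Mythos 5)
Your second, ``cleaner'' approach is the one closest in spirit to the paper, but the paper takes a clever middle route that avoids the permutation Lov\'asz Local Lemma you would need. Instead of one global random permutation $\pi$ of $V(G)$, the paper first splits $V(G)$ into $s=\lceil n/t\rceil$ arbitrary blocks $F_1,\dots,F_s$ of size (roughly) $t$ and then, \emph{independently for each block}, samples a uniformly random bijection from $F_k$ to $[\,|F_k|\,]$, assigning $v$ to $V_{f(v)}$. This gives exact balance ($|V_i|\in\{s-1,s\}$) automatically, like your global permutation, but keeps the random variables independent across blocks. Consequently the contribution of each block to $d_G(v,V_i)$ is a Bernoulli and the full sum is a sum of independent Bernoullis of the right mean, so the standard Chernoff bound applies; moreover $\mathcal E_{i,v}$ depends only on the blocks touching $N(v)$, so the dependency degree is $O(d^2t^2)\le d^4$ and the ordinary symmetric LLL (and the ordinary Moser--Tardos algorithm, which is what the paper's algorithmic claim actually rests on) suffices. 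Your single-permutation variant, by contrast, makes every $A_{v,i}$ depend on the whole of $\pi$, so the canonical dependency graph is complete and you must invoke a lopsided/permutation LLL, together with its more involved algorithmic counterpart (Harris--Srinivasan); you gesture at ``an LLL variant adapted to random permutations'' without naming or checking the hypotheses, and without noting that Moser--Tardos as stated in the paper (\cref{thm:algorithmicLLL}) no longer applies directly because the ``bad events'' are not determined by mutually independent random variables.

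Your first approach (i.i.d.\ labels plus a rebalancing step) has a genuine gap, which you correctly flag: bounding the \emph{total} number $M$ of moved vertices by $O\!\left(n\sqrt{t\log d/d}\right)$ does not by itself control, for a fixed $v$, how many of $v$'s $d$ neighbors get relabelled, and it is not clear how to choose the moves randomly so that a second Chernoff+LLL pass controls this for every $(v,i)$ while also landing exactly on sizes differing by at most $1$. The block-permutation trick in the paper sidesteps this entirely: balance comes for free, so no rebalancing is needed.
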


\begin{proof}
Note that for (say) $d\geq \log^2n$, the lemma follows easily by Chernoff's inequality for the hypergeometric distribution and the union bound. Since we are also interested in graphs with smaller degree, we need a slightly more complicated proof where the union bound is replaced by a standard application of the local lemma (\cref{LLL}).   

Let $s:=\lceil n/t\rceil$ and let $F_1,\ldots, F_s$ be an arbitrary partition of $V(G)$ such that $F_1,\dots, F_{s-1}$ are of size $t$ each. 
Let $f:V(G)\rightarrow \{1,\ldots, t\}$ be a random function chosen as follows: for each $k\in[s]$, the restriction $f\vert_{F_k}$ is a permutation of $\big[|F_k|\big]$ chosen uniformly at random. 
Given such an $f$, define $V_i:=\{v\in V(G) : f(v)=i\}$. Observe that for each $i \in [t]$, $|V_i|$ is either $s-1$ or $s$, so that the desired property $1$ of the lemma holds. We wish to show that, with positive probability, there exists an $f$ such that the corresponding partition $V(G)=V_1\cup \dots \cup V_t$ satisfies property $2$ of the lemma.
  
To this end, fix a vertex $v \in V(G)$ and for each $k\in [s]$, let $p_k:=|N_G(v)\cap F_k|/|F_k|$. Since each $f\vert_{F_k}$ is chosen uniformly at random from among all permutations of $\big[|F_k|\big]$, it follows that for all $i \in [t]$, 
$$\frac{d}{t}-1\leq \sum_{i=1}^{s-1}p_i \leq \mathbb{E}[d_{G}(v,V_i)]= \sum_{i=1}^{s}p_i\leq \frac{d}{t} + 1.$$ 
Therefore, by Chernoff's bounds (\cref{chernoff}), 
$$\Pr\left[d_G(v,V_i)\notin \frac{d}{t}\pm 100\sqrt{\frac{d\log d}{t}}\right]\leq \exp\left(-2000\log d\right)=\frac{1}{d^{2000}}.$$
  
Let $\mathcal E_{i,v}$ denote the event `$d(v,V_i)\notin \frac{d}{t}\pm 100\sqrt{\frac{d\log d}{t}}$', and note that for all $i\in [t]$ and $v\in V(G)$, $\mathcal E_{i,v}$ may depend on an event $\mathcal E_{j,u}$ only if at least one of the following two conditions hold: $u=v$; or $u$ and $v$ have neighbors to the same $F_k$ for some $k$. In particular, each event $\mathcal E_{i,v}$ depends on at most $t+d^2t\leq d^{4}$ events. Finally, since 
  $$\frac{e(d^4+1)}{d^{2000}}<1,$$
the local lemma guarantees the existence of an $f$ as desired. 
\end{proof}

\subsection{Finding dense, regular spanning subgraphs in `nice' bipartite graphs}
The next lemma shows that almost-regular balanced bipartite graphs induced by large disjoint subsets of a good expander contain a spanning regular graph covering almost all the edges. The proof is similar to the proof of Lemma 2.12 in \cite{FJ}, and is based on the following generalization of the Gale-Ryser theorem due to Mirsky \cite{Mirsky}.
\begin{theorem}[\cite{Mirsky}]
  \label{gale}
Let $G=(A\cup B,E)$ be a balanced bipartite graph with $|A|=|B|=m$, and let $r$ be an integer. Then, $G$ contains an $r$-factor if and only if for all $X\subseteq A$ and $Y\subseteq B$
\begin{align*}
  e_G(X,Y)\geq r(|X|+|Y|-m).
\end{align*}
\end{theorem}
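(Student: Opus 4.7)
The plan is to prove the two directions separately. The forward (necessity) direction is a short counting argument: assuming $G$ contains an $r$-factor $H$ and fixing $X \subseteq A$, $Y \subseteq B$, the vertices of $X$ contribute exactly $r|X|$ edge-endpoints of $H$ in $B$; since every vertex of $B \setminus Y$ has degree exactly $r$ in $H$, at most $r(m-|Y|)$ of those endpoints can land in $B \setminus Y$, so at least $r|X| - r(m-|Y|) = r(|X|+|Y|-m)$ lie in $Y$. As $H \subseteq G$, this gives $e_G(X,Y) \geq r(|X|+|Y|-m)$.

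For the nontrivial (sufficiency) direction, I would recast the existence of an $r$-factor as a maximum integer flow question and invoke the max-flow min-cut theorem (with integrality of maximum flows in integer-capacity networks). Build the network with source $s$, sink $t$, an arc $s \to a$ of capacity $r$ for every $a \in A$, an arc $b \to t$ of capacity $r$ for every $b \in B$, and a unit-capacity arc $a \to b$ for every edge $ab \in E(G)$. An integer $s$-$t$ flow of value $rm$ must saturate every source and sink arc, and then flow conservation forces exactly $r$ unit-capacity $A$-to-$B$ arcs to carry flow at each vertex of $A \cup B$, so the used edges form an $r$-factor of $G$; conversely, an $r$-factor of $G$ yields such a flow in the obvious way. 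By Ford--Fulkerson and integrality, this flow exists if and only if every $s$-$t$ cut has capacity at least $rm$.

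The remaining step is to translate the cut condition into the stated hypothesis. An arbitrary $s$-$t$ cut $(S, \overline{S})$ with $s \in S$, $t \in \overline{S}$ is parameterized by $X := A \cap \overline{S}$ and $Y := B \cap S$, and its capacity decomposes as
\[
r|X| \;+\; r|Y| \;+\; e_G(A \setminus X, B \setminus Y),
\]
since the crossing arcs are exactly the source arcs into $X$, the sink arcs out of $Y$, and the $G$-arcs from $A \setminus X$ to $B \setminus Y$. Requiring this to be at least $rm$ rearranges to $e_G(A \setminus X, B \setminus Y) \geq r(m - |X| - |Y|)$, which after relabeling $X' := A \setminus X$ and $Y' := B \setminus Y$ becomes $e_G(X', Y') \geq r(|X'|+|Y'|-m)$, exactly the theorem's hypothesis. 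Since $(X,Y) \mapsto (X', Y')$ is a bijection on pairs of subsets, the cut condition and the hypothesis are equivalent, completing the proof.

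I do not anticipate a substantive obstacle, as the whole argument is a routine application of max-flow min-cut with integrality. The two places that warrant a line of care are (a) verifying that integrality together with flow value $rm$ forces \emph{every} source and sink arc to be saturated (so that flow conservation pins the used $A$-to-$B$ arcs to exactly $r$ at each vertex), and (b) enumerating the three types of crossing arcs in the cut correctly; neither is subtle.
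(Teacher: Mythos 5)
Your proof is correct in both directions. Note, however, that the paper does not actually prove this statement: it is attributed to Mirsky and invoked as a black box (a known generalization of the Gale--Ryser theorem), so there is no in-paper argument to compare against. Your max-flow min-cut derivation is therefore a genuine, self-contained alternative to whatever lives in Mirsky's 1968 paper, which works in the language of integral matrices and Hall/K\"onig-type transversal theory rather than network flows. The flow argument buys you brevity and a clean mechanical translation between cuts and the $(X,Y)$-inequalities, and it also makes the algorithmic remark immediately after the lemma in the paper (that an $r$-factor can be found by standard network-flow algorithms) transparent. Two small points worth double-checking in the write-up, both of which you flagged correctly: the saturation argument needs the observation that the total source capacity equals $rm$ exactly, so a flow of value $rm$ cannot leave any source or sink arc unsaturated; and the cut is a \emph{directed} $s$--$t$ cut, so the only $G$-arcs counted are those from $A\cap S$ to $B\cap\overline{S}$, which is what makes the complement relabeling land precisely on the stated inequality.
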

\begin{lemma} 
\label{r factor}
Let $G$ be an $(n,d,\lambda)$ graph. Let $1\leq t\leq d/100$ be some integer such that $t\mid n$. Let $A$ and $B$ be disjoint subsets of $V(G)$ of sizes $|A|=|B|=\frac{n}{t}$ and consider the bipartite subgraph of $G$ induced by these sets, denoted by $G':=G[A,B]$. Assume further that $\frac{d}{t} - 100\sqrt{\frac{d\log{d}}{t}}\leq \delta(G')\leq \Delta(G') \leq \frac{d}{t} + 100\sqrt{\frac{d\log{d}}{t}}$. Then, $G'$ contains an $r:=\lfloor\frac{d}{t}-\gamma\rfloor$-factor (i.e. an $r$-regular spanning subgraph) for $\gamma = 104\max\{\lambda, \sqrt{\frac{d\log{d}}{t}}\}$, provided that $\gamma < r/2$.  
\end{lemma}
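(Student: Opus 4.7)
The plan is to verify the hypothesis of Theorem~\ref{gale}: that $e_{G'}(X,Y)\ge r(|X|+|Y|-m)$ for every $X\subseteq A$, $Y\subseteq B$. Set $x:=|X|$, $y:=|Y|$, $a:=m-x$, $b:=m-y$, and $z:=x+y-m$, and note the algebraic identity $xy = mz+ab$ (from $xy=(m-a)(m-b)$ and $a+b=m-z$). If $z\le 0$ the inequality is trivial, so assume $z\ge 1$; by the symmetry between $A$ and $B$ I may assume $x\ge y$, equivalently $a\le b$. The starting point is the expander mixing lemma applied to $G$, which (combined with the identity above) gives
\[ e_{G'}(X,Y) \;=\; e_G(X,Y) \;\ge\; \frac{d|X||Y|}{n} - \lambda\sqrt{|X||Y|} \;=\; \frac{d}{t}\Bigl(z+\frac{ab}{m}\Bigr)-\lambda\sqrt{xy}. \]
Since $r\le d/t-\gamma$ by construction, verifying Mirsky's condition reduces to showing $\gamma z + \tfrac{d}{t}\cdot \tfrac{ab}{m}\ge \lambda\sqrt{xy}$ ($\star$). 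The hypothesis $\gamma<r/2\le d/(2t)$ will be used repeatedly; writing $s:=100\sqrt{d\log d/t}$, a short arithmetic check starting from $\gamma=104\max\{\lambda,s/100\}$ gives $\gamma-s\ge \lambda$ in both regimes of the max (when $\lambda\ge s/100$ it reduces to $103\lambda\ge s$; when $\lambda<s/100$ it reduces to $0.04s\ge \lambda$), as well as $\lambda<d/(208t)$.

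I will split on the size of $y$. When $y>m/2$ (and hence $x>m/2$, $a,b<m/2$) I use only $(\star)$ together with the crude bound $\sqrt{xy}\le m$, so it suffices to show $\gamma z+(d/t)(ab/m)\ge \lambda m$. If $a\ge m/4$ then $ab\ge a^2\ge m^2/16$, giving $(d/t)(ab/m)\ge dm/(16t)\ge \lambda m$ from $\lambda<d/(208t)$. If $a<m/4$ then $z=y-a>m/2-m/4=m/4$, giving $\gamma z\ge 104\lambda\cdot m/4 = 26\lambda m\ge \lambda m$. Either way $(\star)$ holds.

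When $y\le m/2$ the pure EML bound is too weak, so I refine using the minimum degree. Writing $e_{G'}(X,Y)=\sum_{v\in Y}d_{G'}(v)-e_{G'}(X',Y)$ with $X':=A\setminus X$ of size $a$, applying $\delta(G')\ge d/t-s$ to the sum, and applying EML a second time to $e_{G'}(X',Y)\le (d/t)(ay/m)+\lambda\sqrt{ay}$, the same algebraic simplification as above produces the enhanced bound $e_{G'}(X,Y)\ge (d/t)(z+ab/m)-ys-\lambda\sqrt{ay}$. The analogue of $(\star)$ now reads $\gamma z+(d/t)(ab/m)\ge ys+\lambda\sqrt{ay}$, which after substituting $z=y-a$ and $b=m-y$ rearranges to
\[ (\gamma-s)\,y + a\bigl[(d/t)(b/m)-\gamma\bigr] \;\ge\; \lambda\sqrt{ay}. \]
The bracket is nonnegative because $b\ge m/2$ forces $(d/t)(b/m)\ge d/(2t)>\gamma$, while $(\gamma-s)y\ge \lambda y\ge \lambda\sqrt{ay}$ using $\gamma-s\ge \lambda$ and $\sqrt{ay}\le y$ (the latter from $a<y$, which holds since $z\ge 1$). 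The main obstacle is precisely this small-$y$ regime: both $z$ and $ab$ can be small while $\lambda\sqrt{xy}$ remains of order $\lambda m$, so neither summand on the left of $(\star)$ dominates by itself; the minimum-degree refinement replaces $\lambda\sqrt{xy}$ on the right by the strictly smaller $ys+\lambda\sqrt{ay}$, and the constant $104$ in the definition of $\gamma$ is chosen exactly so that $\gamma-s\ge \lambda$ persists across both regimes of the max, which is what closes the argument.
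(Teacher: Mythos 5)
Your proposal is correct: every step checks out, including the identity $xy=mz+ab$, the reduction of Mirsky's condition to $\gamma z+\tfrac{d}{t}\tfrac{ab}{m}\ge\lambda\sqrt{xy}$ (legitimate since $r\le d/t-\gamma$ and $z\ge 1$), the arithmetic facts $\gamma-s\ge\lambda$ and $\lambda<d/(208t)$ (the latter from $104\lambda\le\gamma<r/2\le d/(2t)$), and the two-sided expander mixing lemma, which is available because $\lambda$ bounds the second eigenvalue in absolute value. The overall skeleton is the same as the paper's -- verify Mirsky's criterion using the mixing lemma together with the degree hypotheses, and in the hard regime pass to a complement set so that the minimum degree can be brought in -- but your organization is genuinely different in its details: the paper runs five cases, splitting on the size of $|X||Y^c|$ relative to $\lambda^2n^2/d^2$ and on $|X|$ relative to $nr/2d$, argues Cases~2 and~3 by contradiction via $e_G(X,Y^c)\ge\delta(G')|X|-e_G(X,Y)$, and in Case~4 reduces to a bilinear optimization $xy-\beta(x+y-1)\ge 0$ over the triangle; you instead split only on $|Y|$ versus $m/2$, handle the large-$|Y|$ regime directly from the mixing lemma with crude bounds ($\sqrt{xy}\le m$, plus the dichotomy $a\ge m/4$ or $z>m/4$), and in the small-$|Y|$ regime use the complement $A\setminus X$ with minimum degrees of the $Y$-vertices (the mirror image of the paper's complement trick), after which the algebraic identity collapses everything to $(\gamma-s)y+a\bigl[(d/t)(b/m)-\gamma\bigr]\ge\lambda\sqrt{ay}$, settled termwise. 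The net effect is a shorter, direct case analysis that avoids both the contradiction arguments and the bilinear optimization, at the cost of nothing beyond the same constants; the paper's version, with its explicit $|X||Y^c|$ thresholds, makes more transparent where the hypothesis $\lambda$ versus $r$ is actually needed, but the two proofs use the same inputs and give the same conclusion.
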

\begin{proof}
Since $\gamma \geq 0$, the statement is vacuously true whenever $r\leq 0$. Hence, we may assume that $r > 0$. By Mirsky's criterion, it suffices to verify
that for all $X\subseteq A$ and $Y\subseteq B$, we have
$$e_{G}(X,Y)\geq \left(\frac{d}{t}-\gamma\right)\left(|X|+|Y|-\frac{n}{t}\right).$$
We divide the analysis into five cases:

{\bf Case 1:} $|X|+|Y|\leq \frac{n}{t}$. Since $e_G(X,Y)\geq 0$, there is nothing to prove in this case.

{\bf Case 2:} $|X|+|Y| > \frac{n}{t}$, $|Y| \geq |X|$ and $|X||Y^{c}| \leq \frac{\lambda^{2} n^{2}}{d^{2}}$, where $Y^{c}:= B\setminus Y$. Suppose for contradiction that $e_{G}(X,Y)<r\left(|X|+|Y|-\frac{n}{t}\right).$
Then, it must be the case that 
\[
e_{G}(X,Y^{c})\geq\delta(G')|X|-e_{G}(X,Y)>(\delta(G')-r)|X|+r|Y^{c}|.
\]

On the other hand, we know by the expander mixing lemma that 
\[
e_{G}(X,Y^{c})\leq\frac{d}{n}|X||Y^{c}|+\lambda\sqrt{|X||Y^{c}|}\leq2\lambda\sqrt{|X||Y^{c}|},
\]

where the second inequality holds since $|X||Y^{c}|\leq\frac{\lambda^{2}n^{2}}{d^{2}}$.
Hence, we must have 
\[
2\lambda\sqrt{|X||Y^{c}|}\geq(\delta(G')-r)|X|+r|Y^{c}|.
\]

Since both terms on the right hand side are nonnegative, $2\lambda\sqrt{|X||Y^{c}|}$
should also be greater than either of them, for which we must have
\[
\frac{r^{2}}{4\lambda^{2}}|Y^{c}|\leq|X|\leq\frac{4\lambda^{2}}{(\delta(G')-r)^{2}}|Y^{c}|.
\]

In particular, we must have $r(\delta(G')-r)\leq4\lambda^{2}$, which implies $\lambda \geq r$,
which violates our assumption about $\lambda$. 
 
{\bf Case 3:} $|X|+|Y|> \frac{n}{t}$, $|Y|\geq |X|$, $|X||Y^c| > \frac{\lambda^{2} n^{2}}{d^{2}}$ and $|X| < \frac{nr}{2d}$. If $e_G(X,Y) < r\left(|X| + |Y| - \frac{n}{t}\right)$, then by the same argument as above, we must have 
$$2\frac{d}{n}|X||Y^c| \geq (\delta(G')-r)|X| + r|Y^c|.$$
In particular, we must have $|X| \geq \frac{nr}{2d}$, which violates our assumption about $|X|$. 

{\bf Case 4:} $|X| + |Y| > \frac{n}{t}$, $|Y| \geq |X|$ and $|X| \geq \frac{nr}{2d}$. By assumption, we have $2|Y| > n/t$, so that $|Y| > n/2t$. Moreover, since $\gamma < r/2$, we have that $r > d/2t$. Therefore, $\sqrt{|X||Y|} \geq \sqrt{n^{2}/4t^{2}} \geq n/2t$. On the other hand, we also have $(2n\lambda)/(t\gamma ) \leq (2n\gamma) /(104 t\gamma ) \leq n/50t$. Combining these two inequalities, we see that $\frac{\gamma}{2}|X||Y| \geq \frac{n\lambda}{t}\sqrt{|X||Y|}$. Therefore, by the expander mixing lemma, it suffices to verify that 
\[
\frac{t}{n}\left(\frac{d}{t}-\frac{\gamma}{2}\right)|X||Y|\ge\left(\frac{d}{t}-\gamma\right)\left(|X|+|Y|-\frac{n}{t}\right).
\]
 Dividing both sides by $n/t$, we see that this is implied by the inequality 
$$xy-\beta(x+y-1)\geq0,$$
where $\beta = \frac{d/t-\gamma}{d/t-\gamma/2}$, $x=t|X|/n$, $y=t|Y|/n$,  $x+y\geq1$, $0\leq x\leq1$, and $0\leq y\leq1$. Observe that the objective function on the left hand side of the desired inequality is bilinear in $x$ and $y$, and therefore the minimum will be obtained on the triangular boundary of the region. On this boundary, the inequality reduces to one of the following:  $xy\geq 0$; $x \geq \beta x$; $y\geq \beta y$, and is readily verified since $\beta \leq 1$. 

{\bf Case 5:} $|X|+|Y|>\frac{n}{t}$ and $|Y|\leq |X|$. This is
exactly the same as cases (2)-(4) with the roles of $X$ and $Y$ interchanged.
\end{proof}

\begin{remark}
Under the conditions of the above lemma, an $r$-factor in $G'$ can be found efficiently using algorithmic versions of Mirsky's criterion based on standard network flow algorithms (see, e.g., \cite{anstee1983network}). 
\end{remark}

\begin{remark} 
\label{rmk:divisibility}
In the application of this lemma to \cref{main: random regular}, we will have to deal with bipartite graphs as above, except that we are allowed to have $|A| = |B| + 1$. In this scenario, it is impossible to find an $r$-factor. However, by adding a ``fake'' vertex to $B$ with suitable edge connections to $A$, finding an $r$-factor in this new graph using the above lemma, decomposing this $r$-factor into $r$ edge-disjoint perfect matchings using repeated applications of Hall's theorem, and finally removing all edges incident to the ``fake'' vertex, we see that $G':=G[A,B]$ contains $r$-edge disjoint matchings such that every vertex in $A\cup B$ is matched in at least $r-1$ such matchings. 
\end{remark}

\subsection{Avoiding short cycles}
\label{sec:avoiding short cycls in dense}

In this section, we introduce our key technical lemma for proving \cref{main:large d}. Since the usefulness of this lemma may not be apparent at first glance, we encourage the reader to refer to this section only after encountering its application in the proof of \cref{main:large d}.  

\begin{lemma}
  \label{lemma:avoiding short cycles}
There exist universal constants $B, D > 20$ for which the following holds. Let $G$ be a graph with maximum degree $\Delta$ and minimum degree $\delta$ such that $\Delta - \delta \leq \Delta^{5/6}$ and $\Delta \geq D$. Let $M_1,\ldots,M_{\Delta+1}$ be a fixed collection of matchings in the complete graph on $V(G)$. Then, there exists a collection of matchings $M'_1,\ldots, M'_{\Delta+1}$ in $G$, where some of them may possibly be empty, such that:
\begin{enumerate}
\item the graph $G'$, which is obtained from $G$ by deleting all the edges $\bigcup_{i\in[\Delta+1]}E(M'_i)$, has maximum degree at most $\Delta^{1-1/B}$;
\item for all $v\in V(G)$, there are at most $\Delta^{b}$ indices $i\in [\Delta+1]$ for which $v$ lies on a cycle in $M'_i \cup M_i$ of length at most $\Delta^{1/40}$, where $b:=\frac{6}{B} + \frac{1}{20}$. 
\end{enumerate}
Moreover, such a collection of matchings may be obtained in $\text{poly(V(G))}$ time with high probability. 
\end{lemma}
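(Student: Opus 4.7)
The plan is to produce $M'_1,\dots,M'_{\Delta+1}$ from a nibble-type random proper edge-colouring of $G$, obtained via the black-box result of \cite{DGP}. That black box supplies a random partial proper edge-colouring of $G$ with $\Delta+1$ colours whose uncoloured subgraph has maximum degree at most $\Delta^{1-1/B}$ (immediately yielding property~(1)), and which is sufficiently pseudorandom that, for any ``local'' family of edge-colour events, the joint distribution is close to the product distribution in which each edge independently receives colour $i$ with probability $(1+o(1))/(\Delta+1)$. The approximate degree-regularity hypothesis $\Delta-\delta\le\Delta^{5/6}$ is what lets one apply the nibble with uniform colour budgets.

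Granting this pseudorandomness, I would verify property~(2) by a first-moment calculation. Fix $v\in V(G)$ and $i\in[\Delta+1]$. The multigraph $M'_i\cup M_i$ has maximum degree~$2$, so the component through $v$ is either a path or an even alternating cycle; if $v$ sits on such a cycle of length $\ell$, the $\ell/2$ edges of $M_i$ on it are forced by the fixed $M_i$, and the cycle is determined by $\ell/2-1$ free $M'_i$-neighbour choices (with the final $M'_i$-edge forced to close at $v$). There are at most $\Delta^{\ell/2-1}$ such potential cycles through $v$, and by pseudorandomness each is realised with probability $O(\Delta^{-\ell/2})$. Summing over $2\le\ell\le\Delta^{1/40}$ gives $\Pr[X^v_i=1]\le O(\Delta^{1/40-1})$, where $X^v_i$ denotes the indicator that $v$ lies on a short cycle in $M'_i\cup M_i$, and hence
\[
\mu_v \;:=\; \mathbb{E}\Bigl[\sum_{i} X^v_i\Bigr] \;=\; O(\Delta^{1/40}).
\]

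To upgrade this expectation to a high-probability statement I would apply a Talagrand/bounded-differences inequality (or the concentration statement bundled into the \cite{DGP} black box): $\sum_i X^v_i$ is determined by the colours of edges within distance $\Delta^{1/40}$ of $v$, with each edge affecting the sum by $O(1)$, whence $\Pr\bigl[\sum_i X^v_i>\Delta^{b}\bigr]\le\exp(-\Delta^{\Omega(1)})$ for any fixed $b>1/40$. Taking $b=6/B+1/20$, the bad events $\{\sum_i X^v_i>\Delta^b\}_{v\in V(G)}$ form a local system with dependency degree at most $\Delta^{O(\Delta^{1/40})}$, so the hypothesis of \cref{LLL} is comfortably satisfied for $B$ a sufficiently large absolute constant, producing a single colouring that meets both properties. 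The algorithmic claim follows by combining the efficient realisation of the \cite{DGP} nibble with \cref{thm:algorithmicLLL}. The main obstacle is verifying that the pseudorandomness packaged in \cite{DGP} is strong enough both to justify the approximate independence used in the first-moment count (along a would-be short cycle of up to $\Delta^{1/40}$ edges) and to power the sub-exponential concentration of the local functional $\sum_i X^v_i$; once these guarantees are in hand, the cycle counting, local-lemma application, and algorithmic wrap-up are routine.
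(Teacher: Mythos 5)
Your overall framework (run the DGP nibble, then apply the Lov\'asz Local Lemma to certify all vertices simultaneously) matches the paper's, and property~(1) does follow from the DGP black box essentially as you say. But the way you propose to establish property~(2) has a genuine gap precisely at the step you flag as the ``main obstacle,'' and that obstacle is not a routine verification --- it is the technical crux, and the paper resolves it by a fundamentally different mechanism.

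Your first-moment count assumes that, under the nibble's final colouring, a fixed alternating cycle of length $\ell\le\Delta^{1/40}$ through $v$ is realised with probability $O(\Delta^{-\ell/2})$, i.e.\ that the events ``edge $e_j$ receives final colour $\gamma$'' behave approximately independently across the $\ell/2$ cycle edges. The DGP analysis does not supply this: it only certifies the quantities $|A_i(u)|$, $|A_i(e)|$, and $\deg_{i,\gamma}(u)$, which are marginal/degree-type statistics, not joint probabilities of a specific size-$\ell/2$ colour pattern across a sequence of edges. (Even the weak form --- that a single specified edge ends up with a specified colour with probability $\Theta(1/\Delta)$ --- is not explicit in their lemmas, let alone the joint version over $\Delta^{1/40}$-length patterns.) Establishing that joint estimate would require redoing the nibble analysis with a much finer inductive invariant; it is not a consequence of the black box. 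Likewise the concentration step does not go through: bounded differences over ``edges within distance $\Delta^{1/40}$ of $v$'' involves $\Delta^{\Theta(\Delta^{1/40})}$ underlying choices and produces a vacuous tail bound, while Talagrand-type certificate arguments require the underlying random variables to be independent, which the final edge colours emphatically are not (they are a highly non-local function of the nibble's stage-by-stage randomness).

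The paper avoids both problems by controlling $C_i(u)$, the count of short-cycle colours at $u$, \emph{incrementally} rather than all at once. In each stage, a path $P_\gamma$ through $u$ in $M_\gamma\cup M'_\gamma$ can close into a short cycle only if one of a bounded set of specific edges is \emph{tentatively} coloured $\gamma$ in that stage; tentative colours within a single stage \emph{are} independent and uniform over the current palettes, so a Chernoff bound on a binomial directly controls $C_{i+1}(u)-C_i(u)$ (\cref{prop:cycle-bound}). The LLL is then applied once per stage to the stage-level events (\cref{lemma:one-step}), and iterating over $t_\epsilon$ stages (\cref{prop:good-nibbles}) gives the final bound. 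This stage-by-stage approach is precisely what makes the probability estimates and the local-lemma dependency degree tractable, and it is what your one-shot first-moment-plus-concentration route is missing.
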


The proof of this lemma builds on the proof of the main result in the work of Dubhashi, Grable, and Panconesi \cite{DGP}. Since the details are somewhat involved, we defer them to \cref{appendix:nibbling}.  

\section{Proofs of main results}
\label{sec:proofs}
In this section, we conclude the proofs of our main results. Since these proofs build on the general strategy discussed earlier, we encourage the reader to review the construction in \cref{sec:outline} before proceeding. We start by proving \cref{main: random regular} as a warm up since its proof is simpler. 

\subsection{Proof of \cref{main: random regular}}
Let $G$ be an $(n,d,\lambda)$-graph and set $\gamma=104\max\{\lambda,\sqrt{\frac{d\log d}{t}}\}$.  As in the general proof scheme presented in \cref{sec:outline}, we start with a vertex decomposition $V(G)=V_1\cup\ldots V_t$ satisfying the conclusions of \cref{lemma: vertex partitioning}, where $t \leq d/100$ is a positive even integer which will be specified below. For all $i\neq j$, let $\mathcal{M}_{ij}$ be a collection of $r:=\lfloor \frac{d}{t}-\gamma\rfloor$ edge-disjoint matchings of the bipartite graph $B_{ij}:=G[V_i,V_j]$ as in \cref{rmk:divisibility} -- such a decomposition exists for all sufficiently large $d$ since $\gamma < \frac{r}{2}$ holds by our choice of $t$ below, and our assumption that $\lambda \leq d^{2/3}$.  

Let $\mathcal P:=\{P_1,\ldots,P_{t/2}\}$ be a Hamiltonian path decomposition of $K_t$, and for each $P\in \mathcal P$, let $\mathcal F_P$ be the collection of $r$ edge-disjoint linear forests obtained as in \cref{sec:outline}. This gives us a set of $\frac{rt}{2}$ edge-disjoint linear forests of $G$. The key observation here is that the graph $L$ induced by all edges of $G$ which are not in any such linear forest has 
maximum degree $\Delta(L) \leq d-(t-1)(r-1) \leq (\gamma+1)t + r$ since each vertex in $V_i\cup V_j$ is in at least $r-1$ of the $r$ edge-disjoint matchings selected in $B_{ij}$. Our goal now is to find a decomposition of the edges of $L$ into as few linear forests as possible.  
The bound \eqref{eq:Noga's bound} ensures that we can find a decomposition into at most $
\Delta(L)/2+ C(\Delta(L))^{2/3}\log ^{1/3}(\Delta(L))$ linear forests. Together with the collection of $rt/2$ edge-disjoint linear forests that we built earlier, this shows that
\begin{align*}
\text{la}(G)
&\leq \frac{rt}{2} + \frac{\Delta(L)}{2} + C(\Delta(L))^{2/3}\log ^{1/3}(\Delta(L))\\
&\leq \frac{rt}{2} + \frac{\gamma t + t + r}{2} + C(\Delta(L))^{2/3}\log ^{1/3}(\Delta(L))\\
&\leq \frac{(\lfloor \frac{d}{t} - \gamma \rfloor +\gamma)t}{2} + \frac{t+r}{2} + C(\Delta(L))^{2/3}\log ^{1/3}(\Delta(L))\\
&\leq \frac{d}{t} + \frac{t+r}{2} + C(\Delta(L))^{2/3}\log ^{1/3}(\Delta(L)).
\end{align*}
Setting $t = \tilde{\Theta}\left(\frac{d^{3}}{\gamma^{2}}\right)^{1/5} $ to optimize the error term (in which case $\gamma = 104\lambda$) shows that 
$\la(G) \leq \frac{d}{2} + \tilde{O}\left((d\lambda)^{2/5}\right)$, where the tilde hides logarithmic dependence on $d$. If instead of \eqref{eq:Noga's bound}, we use \cref{main:large d} to handle the linear arboricity of $L$, then we get that 
$$\la(G) \leq \frac{d}{2} + O\left((d \lambda)^{\frac{2}{5} - \beta}\right) $$
for some $\beta > 0$, as desired. 


\subsection{Proof of \cref{main:large d}}
Let $G$ be a $d$-regular graph on $n$ vertices with $d$ sufficiently large. Let $V(G)=V_1\cup\ldots V_t$ be a vertex-partition satisfying the conclusions of \cref{lemma: vertex partitioning}, where $t \leq d/100$ is a positive even integer which will be specified below. As before, let $\mathcal{M}_{ij}$ denote a decomposition of the bipartite graph $B_{ij}:=G[V_i,V_j]$ into at most $\Delta(B_{ij})+1$ matchings, and let $\mathfrak{M}=\{\mathcal M_{ij}\}_{i\neq j}$ denote the collection of such decompositions.

Let $\mathcal P:=\{P_1,\ldots,P_{t/2}\}$ be a Hamiltonian path decomposition of $K_t$, and for each $P\in \mathcal P$, let $\mathcal F_P$ be the collection of at most $s$ edge-disjoint linear forests obtained as in \cref{sec:outline}. 
Fix an arbitrary labeling $\mathcal F_P = \{F_{P,1},\dots, F_{P,s}\}$ of these forests. Moreover, for each $P\in \mathcal P$, let $s_P$ and $t_P$ denote its endpoints, and observe that all the pairs $\{s_P,t_P\}_{P\in \mathcal P}$ are disjoint.

Next, for each $P\in \mathcal P$, let $\mathcal{M}_{s_{P}}:=\{M_{P,1}'',\dots,M_{P,s}''\}$ be a decomposition of the edges of $G[V_{s_P}]$ into $s$ matchings; the existence of such a decomposition is guaranteed by Vizing's theorem. For each $i \in [s]$, let $F''_{P,i}:=F_{P,i}\cup M''_{P,i}$, and observe that $\mathcal{F''_P}:= \{F''_{P,1},\dots, F''_{P,s}\}$ is a collection of edge-disjoint linear forests which covers all the edges $\bigcup_{ij \in E(P)}E(B_{ij}) \cup E(G[V_{s_P}])$. For each $i\in [s]$, let $M_{P,i}$ be the set of all pairs $\{x,y\}\subseteq V_{t_P}$ for which there exists a path in $F''_{P,i}$ of length exactly $2t-1$ with $x$ and $y$ as its endpoints. Note that such paths correspond precisely to two `full paths' of length $t-1$ in $F_{P,i}$ whose endpoints in $V_{s_P}$ are an edge of $M''_{P,i}$. Since each $M''_{P,i}$ is a matching, it follows immediately that each $M_{P,i}$ is a matching of the complete graph on the vertex set $V_{t_P}$.

For each $P\in \mathcal P$, consider the graph $G^*_P:=G[V_{t_P}]$. By \cref{lemma: vertex partitioning}, we have 
$$\Delta_P-k\leq \delta(G^*_P)\leq \Delta(G^*_P)=:\Delta_P,$$
where $k = 200((d\log{d})/t)^{1/2}$. Below, we will choose $t$ be to less than $\sqrt{d}$. Therefore, for $d$ sufficiently large, $\Delta_{P} \geq D$ and $k \leq \Delta_{P}^{5/6}$, so that by applying \cref{lemma:avoiding short cycles} to $G^*_P$ we obtain a collection of $\Delta_P + 1$ matchings $M'_1,\ldots,M'_{\Delta_P + 1}$ in $G^*_P$, where some of them are possibly empty, such that:

\begin{enumerate}
  \item the graph $G'_P$, which is obtained from $G^*_P$ by deleting all the edges $\bigcup_{i\in [\Delta_P + 1]}E(M'_{P,i})$,  has maximum degree at most $\Delta_P^{1-1/B}$, and
  \item for all $v\in V(G)$, there are at most $\Delta_P^{b}$ indices $i\in [\Delta_P+1]$ for which $v$ lies on a cycle in $M_i \cup M'_i$ of length at most $\Delta_P^{1/40}$.
\end{enumerate}

With this in hand, let $F'_{P,i}:= F''_{P,i}\cup M'_{P,i}$ for all $i\in [\Delta_P+1]$, and let $\mathcal F'_P:=\{F'_{P,1},\dots,F'_{P,\Delta_P + 1}\}$. Since each $F'_{P,i}$ is a graph of maximum degree at most $2$, it is a disjoint union of cycles, paths and isolated vertices. We wish to remove one edge from each cycle in each $F'_{P,i}$. For the analysis, it will be convenient to do it in the following manner: for any cycle $C$ in any $F'_{P,i}$ of length at most $t\Delta_P^{1/40}$, remove an edge arbitrarily from $M'_{P,i}$; on the other hand, for a cycle $C$ in some $F'_{P,i}$ of length at least $t\Delta_P^{1/40}$, delete an edge chosen uniformly at random from among the first (with respect to a fixed, but otherwise arbitrary ordering of the edges) $\lfloor \Delta_P^{1/40}/2\rfloor $ edges of $M'_i$ appearing in this cycle. Let $F^*_{P,i}$ denote the (random) linear forest resulting from $F_{P,i}$ after this deletion, and let $\mathcal F^*_P:=\{F^*_{P,1},\dots,F^*_{P,\Delta_P+1}\}$ be the collection of edge-disjoint linear forests obtained from the Hamiltonian path $P$ in this manner. 

For each $v \in V_{t_P}$, let $X(v)$ denote the (random) number of edges in $\bigcup_{i\in [\Delta_P+1]}F^*_{P,i} \setminus F'_{P,i}$ which are incident to $v$. We claim that there is a choice of $\mathcal F^*_P$ for which $X(v) \leq 9\Delta_P^{39/40}$ for all $v \in V_{t_P}$. For this, fix $v\in V_{t_P}$ and observe that since $v$ is part of at most $\Delta_P^{b}$ cycles of length $\leq \Delta_P^{1/40}$ in $M_i\cup M'_i$, it follows that $v$ can be a part of at most $\Delta_P^{b}$ cycles in $F'_{P,i}$ of length $\leq t\Delta_P^{1/40}$. Hence, the contribution to $X(v)$ from such cycles is at most $\Delta_P^{b}$. Moreover, the probability that any cycle of length at least $t\Delta_P^{1/40}$ contributes to $X(v)$ is bounded above by $1/\lfloor \Delta_P^{1/40}/2\rfloor \leq 3/\Delta_P^{1/40}$, since such a cycle contributes to $X(v)$ only when the edge deleted from it is incident to $v$, where the edge to be deleted is chosen uniformly at random from among $\lfloor \Delta_P^{1/40}/2\rfloor$ edges, of which at most one is incident to $v$. Since there are at most $\Delta_P+1$ cycles containing $v$ to start with, and since deletions from long cycles are made independently, it follows from Chernoff's bounds that with probability at least $1-\exp\left(-\Delta_P^{38/40}\right)$, $X(v) \leq \Delta_P^{b} + 8\Delta_P^{39/40} \leq 9\Delta_P^{39/40}$. Let $\mathcal E_v$ denote the event that this does not happen. Note that $\mathcal E_v$ can depend on $\mathcal E_u$ only if $v$ and $u$ are both incident to the first $\lfloor \Delta_P^{1/40}/2\rfloor$ edges of $M'_i$ in a long cycle. Again, since there are at most $\Delta_P+1$ cycles to start with, it follows that any $\mathcal E_v$ can depend on at most $\Delta_P^{2}$ other $\mathcal E_u$'s. Therefore, since $\Delta_P^{2}\exp(-\Delta_P^{39/40}) \ll 1$, it follows from the local lemma that $\Pr[\cap_{v\in V_{t_P}}\overline{\mathcal E_v}] > 0$, which proves the desired claim. \\

Finally, repeat the above construction for each $P\in \mathcal P$ to obtain a collection of edge disjoint linear forests $\mathfrak F:= \{\mathcal F^*_P\}_{P\in \mathcal P} $, and let $L$ denote the leave graph obtained by deleting from $G$ any edge which appears in $\mathfrak F$. Observe that $L$ consists of edges of the following two types: 
\begin{itemize}
\item edges within $V_{t_P}$ that are not contained in $\cup_{i\in [\Delta_P+1]}E(M'_{P,i})$ i.e. edges in the graph $G'_P$; 
\item edges in $\cup_{P\in P}F'_P$ that are not contained in $\mathfrak{F}$ i.e. edges removed during the deletion process described above.  
\end{itemize}

Recall from \cref{lemma: vertex partitioning} that $\Delta_P \leq s$ for all $P\in \mathcal{P}$. Since the $V_i's$ are disjoint, it follows from the above discussion that $\Delta(L)\leq 9\Delta_P^{39/40}+\Delta_P^{1-1/B}\leq 10s^{1-\gamma}$, where $\gamma:=\min\{\frac{1}{40},\frac{1}{B}\}$ Therefore, by Vizing's theorem, one can decompose $L$ into at most $10s^{1-\gamma}+1$ edge-disjoint matchings. These matchings, together with $\mathfrak{F}$, give a decomposition of $E(G)$ into a number of linear forests which is at most 
$$\frac{st}{2}+10s^{1-\gamma}+1\leq \frac{d}{2}+200\left(\sqrt{dt\log d}+\left(\frac{d}{t}\right)^{1-\gamma}\right).$$

Optimizing the error term by setting the two summands in the parentheses to be equal gives $t = \left(\frac{d^{1-2\gamma}}{\log d}\right)^{\frac{1}{3-2\gamma}}$,
in which case, we get that 
$$\text{la}(G)\leq \frac{d}{2}+d^{2/3-\alpha},$$
for some $\alpha>0$, as desired. 

\bibliographystyle{abbrv}
\bibliography{arboricity}

\begin{appendix} 
\section{Proof of \cref{lemma:avoiding short cycles}}
\label{appendix:nibbling}
In this appendix, we show how the proof of the main result in \cite{DGP}, which is based on the celebrated R\"odl nibble \cite{Rodl}, implies \cref{lemma:avoiding short cycles}. The organization of this appendix is as follows: \cref{algorithm:nibble} records the nibbling algorithm used in \cite{DGP}; \cref{thm:nibbling-expectation} and \cref{thm:nibbling-variance-bounds} record the conclusion of the analysis in \cite{DGP}; \cref{corollary:nibbling-concentration} adapts the analysis in \cite{DGP} for our choice of parameters; \cref{prop:cycle-bound} and \cref{lemma:one-step} show that \cref{algorithm:nibble} produces only a small number of short cycles with respect to any fixed collection of matchings, and finally, \cref{prop:good-nibbles} proves \cref{lemma:avoiding short cycles}. 

Before proceeding to formal details, let us provide a high level overview of what follows. The goal in \cite{DGP} is to produce a proper edge-coloring of a $\Delta$-regular graph $G$ using $(1+\epsilon)\Delta$ colors (here, $\epsilon$ is allowed to depend on $\Delta$). Their algorithm runs in two phases -- the first phase, which is based on the semi-random `nibble' method of R\"odl, is the one relevant to our paper; the second phase actually uses a trivial algorithm.  In the first phase, the algorithm seeks to color `most' of the edges using a \emph{palette} of $\Delta$ colors. Starting with the input graph $G_0:=G$, the algorithm generates a sequence $G_0,G_1,\ldots, G_{t_\epsilon}$ of graphs, where $G_i$ is the graph induced by the edges which are still uncolored at the end of stage $i$. In each stage $i$, each edge has a palette of all `available' colors, where initially, the palette of each edges is the set $\{1,\ldots,\Delta\}$. 
Each vertex selects an $\epsilon/2$-fraction of uncolored edges incident to it, and each selected edge picks a \emph{tentative} color from its palette independently and uniformly at random. If a \emph{selected} edge has no `color-conflicts' with any neighboring edge, then the corresponding color becomes the final color of the edge. All the palettes of the remaining edges are updated by deleting all the final colors of neighboring colored edges. This process is then repeated in the next stage. The algorithm continues for a number of rounds by the end of which (with high probability) each vertex has no more than $\epsilon \Delta$ uncolored edges incident to it.  

As in all nibbling-based arguments, the key idea is that in each stage, the number of edges which experience color conflicts is only a small fraction of the number of edges selected to be colored at this stage. The main effort in \cite{DGP} is spent in showing that this holds true with high probability throughout the process. They do this by showing inductively -- and this is what we will use in our analysis -- that the graphs $G_i$ and the color palettes of each edge behave almost like `random' subgraphs and subsets of the original ones. We now give a formal description of the algorithm and analysis in \cite{DGP}. Following this, we will show how to tailor it to our application. \\



\cref{algorithm:nibble} is the first phase of the algorithm used in \cite{DGP} as described above.
\begin{algorithm}
\caption{The Nibble Algorithm}
\label{algorithm:nibble}
The initial graph $G_0:= G$, the input graph. 
Each edge $e = uv$ is initially given the palette $A_0(e) = \{1,\dots, \Delta\}$.
For $i=0,\dots,t_\epsilon - 1$ stages, repeat the following:
\begin{itemize}
\item \emph{(Select nibble)} Each vertex $u$ randomly selects an $\epsilon/2$ fraction of the uncolored edges incident to itself. An edge is considered selected if either or both of its endpoints selects it. 
\item \emph{(Choose tentative color)} Each selected edge $e$ chooses independently at random a tentative color $t(e)$ from its palette $A_i(e)$ of currently available colors.
\item \emph{(Check color conflicts)} Color $t(e)$ becomes the final color of $e$ unless some edge incident to $e$ has chosen the same tentative color. 
\item \emph{(Update graph and palettes)} The graph and the palettes are updated by setting 
$$ G_{i+1} = G_i - \{e \vert e \text{ got a final color}\}$$
and, for each edge $e$, setting 
$$ A_{i+1}(e) = A_i(e) - \{t(f)\vert f \text{ incident to } e, t(f) \text{ is the final color of } f \}.$$
\end{itemize}
\end{algorithm}
The analysis of this algorithm is based on controlling the following three quantities:
\begin{itemize}
\item $|A_i(u)|$, the size of the implicit palette of vertex $u$ at the end of stage $i$, where the implicit palette $A_i(u)$ denotes the set of colors not yet successfully used by any edge incident to $u$. 
\item $|A_i(e)|$, the size of the palette $A_i(e)$ of edge $e$ at the end of stage $i$. Note that $A_i(uv) = A_i(u) \cap A_i(v)$. 
\item $\deg_{i,\gamma}(u)$, the number of neighbors of $u$ which, at the end of stage $i$, have color $\gamma$ in their palettes. 
\end{itemize}

Before we discuss their analysis of this algorithm, we need some notation. Define $d_i$ and $a_i$ as follows: first, define initial values
$$d_0, a_0 := \Delta$$
and then, recursively define 
\begin{align*}
d_{i} & :=(1-p_{\epsilon})d_{i-1}=(1-p_{\epsilon})^{i}\Delta;\\
a_{i} & :=(1-p_{\epsilon})^{2}a_{i-1}=(1-p_{\epsilon})^{2i}\Delta=d_{i}^{2}/\Delta,
\end{align*}
where 
\[
p_{\epsilon}:=\epsilon\left(1-\frac{\epsilon}{4}\right)e^{-2\epsilon(1-\epsilon/4)}.
\]
In particular, note that setting 
$$t_\epsilon := \frac{1}{p_\epsilon}\log{\frac{4}{\epsilon}},$$ 
we have $d_{t_\epsilon} \leq {\epsilon\Delta}/4$. Also, provided that $\epsilon < 1/100$, we have $d_{t_\epsilon} = (1-p_\epsilon)^{t_\epsilon} \Delta \geq e^{-2p_\epsilon t_\epsilon}\Delta = \epsilon^{2}\Delta/16$.  

\begin{theorem}[\cite{DGP}, Lemmas 9, 12 and 15, and the discussion in Section 5.5]
\label{thm:nibbling-expectation}
There exist constants $K,D> 0$ such that if $\epsilon < 1/100$, $\epsilon^{2}\Delta \geq D$, and at the end of stage $i$ of \cref{algorithm:nibble} the following holds for all vertices $u$, edges $e$ and colors $\gamma$ with $e_i \leq 1/2$: 
\begin{align*}
|A_{i}(u)| & =(1\pm e_{i})d_{i}\\
|A_{i}(e)| & =(1\pm e_{i})a_{i}\\
\deg_{i,\gamma}(u) & =(1\pm e_{i})a_{i},
\end{align*}
then the following holds for all vertices $u$, edges $e$, and colors $\gamma$:
\begin{align*}
\Ex\big[|A_{i+1}(u)|\big] & =(1\pm (1+K\epsilon)e_{i})d_{i+1}\\
\Ex\big[|A_{i+1}(e)|\big] & =(1\pm (1+K\epsilon)e_{i})a_{i+1}\\
\Ex\big[\deg_{i+1,\gamma}(u)\big] & =(1\pm (1+K\epsilon)e_{i})a_{i+1}.
\end{align*}
\end{theorem}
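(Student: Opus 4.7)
The plan is to fix all randomness used through stage $i$---so that $G_i$, every palette $A_i(\cdot)$, and every color degree $\deg_{i,\gamma}(\cdot)$ is fixed and satisfies the inductive hypothesis---and then to analyze the one-step probability that a fixed color survives at a given vertex or edge, or that a given neighbor remains a $\gamma$-neighbor. The inductive hypothesis lets one substitute $1\pm e_i$ for every combinatorial parameter appearing in these one-step probabilities; the main task is to arrange the computation so that the resulting multiplier of $e_i$ on the output side is $1+O(\epsilon)$ rather than $1+O(1)$.

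The cleanest case is $\mathbb{E}\bigl[|A_{i+1}(u)|\bigr]$. Fix $\gamma\in A_i(u)$. Since at most one edge at $u$ can receive final color $\gamma$, the events ``$uv$ receives final color $\gamma$'' for different neighbors $v$ with $\gamma\in A_i(v)$ are disjoint, and each of the individual probabilities factors as (probability $uv$ is selected by $u$ or $v$) $\times$ (probability $uv$ then picks $\gamma$ as tentative color) $\times$ (probability no other edge at $u$ or $v$ picks the same tentative color), giving
\begin{align*}
\Pr[uv \text{ receives final color } \gamma] = \bigl(1-(1-\epsilon/2)^{2}\bigr)\cdot\frac{1}{|A_i(uv)|}\cdot\Bigl(1-\frac{\epsilon(1-\epsilon/4)}{|A_i(uv)|}\Bigr)^{\deg_{i,\gamma}(u)+\deg_{i,\gamma}(v)-2}.
\end{align*}
Using the inductive estimates $|A_i(uv)|=(1\pm e_i)a_i$ and $\deg_{i,\gamma}(\cdot)=(1\pm e_i)a_i$, together with the Taylor expansion $(1-x)^N=e^{-Nx}(1+O(Nx^2))$ (valid because $\epsilon^2\Delta\ge D$), the last factor becomes $e^{-2\epsilon(1-\epsilon/4)}(1\pm O(\epsilon e_i))$: the $(1\pm e_i)$ factors inside the exponent produce an error of order $e_i$ multiplied by the exponent $O(\epsilon)$. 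Summing over the $(1\pm e_i)a_i$ admissible $v$'s gives $\Pr[\gamma\notin A_{i+1}(u)]=p_\epsilon(1\pm O(\epsilon e_i))$, hence $\Pr[\gamma\in A_{i+1}(u)] = (1-p_\epsilon)(1\pm O(\epsilon e_i))$, and summing over the $(1\pm e_i)d_i$ colors of $A_i(u)$ yields $\mathbb{E}\bigl[|A_{i+1}(u)|\bigr] = (1\pm(1+K\epsilon)e_i)d_{i+1}$.

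The remaining two expectations follow from the same template. For $e=uv$ and $\gamma\in A_i(e)$, the color $\gamma$ survives in $A_{i+1}(e)$ iff no edge at $u$ \emph{and} no edge at $v$ receives final color $\gamma$; running the calculation above at both endpoints (the two local contributions are independent up to an $O(p_\epsilon^2)$ correction arising from the shared edge $uv$ itself) gives survival probability $(1-p_\epsilon)^2(1\pm O(\epsilon e_i))$, and summing over $\gamma$ produces the claim for $|A_{i+1}(e)|$. For $\deg_{i+1,\gamma}(u)$, each neighbor $v$ that is already a $\gamma$-neighbor at stage $i$ remains one iff (a) $uv$ is uncolored at the end of stage $i+1$, and (b) $\gamma\in A_{i+1}(v)$. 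Event (a) has probability $1-p_\epsilon$ by summing the per-edge computation above over all colors in $A_i(uv)$; event (b) has probability $1-p_\epsilon$ by applying the first step at $v$; and their joint probability, after inclusion-exclusion accounting for the overlap event ``$uv$ receives final color $\gamma$'' (probability $O(p_\epsilon^2)$), is $(1-p_\epsilon)^2(1\pm O(\epsilon e_i))$. Summing over the $(1\pm e_i)a_i$ admissible neighbors $v$ yields the required bound.

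The main obstacle is keeping the output error at $1+O(\epsilon)$ times $e_i$ rather than a constant times $e_i$; the latter would blow up over $t_\epsilon=\Theta(\epsilon^{-1}\log\epsilon^{-1})$ stages. The two genuine sources of dependence in the algorithm---each vertex samples a random $\epsilon/2$-fraction of its uncolored edges (hypergeometric rather than Bernoulli), and the no-conflict indicators for nearby candidate colors share common selection indicators---must be shown to perturb the probabilities above by only a $1\pm O(\epsilon)$ factor. Following \cite{DGP}, I would handle the first by comparing the vertex-local hypergeometric sample with independent $\epsilon/2$-Bernoullis, the discrepancy being $o(e_i)$ once $\epsilon^2\Delta\ge D$, and the second by the same Taylor/Poisson expansion already used, in which correlations between different candidate colors at a common endpoint contribute only $O(\epsilon)$ times the linear error.
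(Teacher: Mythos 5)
The paper does not prove this statement itself: it is imported verbatim, as a black box, from Lemmas 9, 12, and 15 of \cite{DGP} (together with the discussion in their Section 5.5). So there is no in-paper proof to compare against; what can be assessed is whether your sketch is faithful to the DGP mechanism, and whether its internal logic is sound.

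In outline you do capture the right mechanism: condition on the stage-$i$ state, compute one-step expectations by substituting the $(1\pm e_i)$ estimates into per-edge and per-color survival probabilities, approximate the no-conflict product by an exponential with exponent $O(\epsilon)$, and observe that the $O(\epsilon)$ exponent attenuates the $(1\pm e_i)$ perturbations to $1\pm O(\epsilon e_i)$ so that the recursion survives $\Theta(\epsilon^{-1}\log\epsilon^{-1})$ rounds. You also correctly flag the two genuine sources of dependence that DGP must control (vertex-local hypergeometric nibble selection rather than independent Bernoulli selection; correlations between conflict indicators at a shared endpoint). That is the right list.

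There is, however, an accounting slip in your derivation of $\Pr[\gamma\notin A_{i+1}(u)]$. You write this as $p_\epsilon(1\pm O(\epsilon e_i))$, but the factor $1/|A_i(uv)|$ contributes a raw $(1\pm e_i)$ multiplicative error and the number of admissible $v$'s contributes another raw $(1\pm e_i)$; these two error sources are driven by logically distinct quantities (edge-palette size vs.\ color degree), so nothing forces them to cancel. One therefore only gets $\Pr[\gamma\notin A_{i+1}(u)] = p_\epsilon(1\pm O(e_i))$. The conclusion you want is nonetheless correct, but for a slightly different reason: since $p_\epsilon = O(\epsilon)$, passing to the complement gives
\[
\Pr[\gamma\in A_{i+1}(u)] \;=\; 1 - p_\epsilon(1\pm O(e_i)) \;=\; (1-p_\epsilon)\Bigl(1 \pm O\bigl(\tfrac{p_\epsilon}{1-p_\epsilon}\, e_i\bigr)\Bigr) \;=\; (1-p_\epsilon)\bigl(1\pm O(\epsilon e_i)\bigr).
\]
This smallness of $p_\epsilon$, not the exponent alone, is the mechanism that regains the crucial $\epsilon$ factor, and it is the single most load-bearing step in the DGP bookkeeping; it deserves to be stated explicitly rather than absorbed silently. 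The same care is needed in your treatments of $|A_{i+1}(e)|$ and $\deg_{i+1,\gamma}(u)$: the inclusion-exclusion cross terms you mention (for example, the probability that the particular edge $uv$ itself is finally colored $\gamma$, which is $O(\epsilon/a_i)$) must be verified to be $O(\epsilon e_i)$ and not merely $O(e_i)$, which is where the hypotheses $\epsilon^2\Delta\geq D$ and the lower bound on $e_0$ from \cref{rmk:init-error} enter. With those adjustments, your sketch lines up with the DGP argument.
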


\begin{remark}
\label{rmk:init-error}
In our case (\cref{lemma:avoiding short cycles}), we have $$|A_0(u)| =|A_0(e)| = \Delta,$$ 
and 
$$\deg_{0,\gamma} = \deg(u) = \Delta\left(1 \pm \frac{\Delta - \delta}{\Delta}\right)$$
for all vertices $u$, edges $e$, and colors $\gamma$. Therefore, we can take $$ e_0 = \frac{\Delta - \delta}{\Delta}\leq \Delta^{-1/6}.$$
\end{remark}

In order to show that the above random variables concentrate around their expectation, we will (as in \cite{DGP}) use the following concentration inequality due to Grable \cite{Grable}. The statement of this inequality uses the notion of the `variance of a strategy for determining a random variable' of the form $Y=f(X_1,\dots,X_n)$, whose definition we reproduce verbatim from \cite{DGP} for the reader's convenience. A \emph{querying strategy} for $Y$ is a decision tree whose internal nodes designate queries to be made. Each node of the tree represents a query
of the type ``what was the random choice of $X_i$?''. A node has as many children as there are random choices for $X_i$. Every path from the root to a node which goes through vertices corresponding to $X_{i_1},\dots,X_{i_k}$ defines an assignment $a_1,\dots,a_k$ to these random variables. We can think of each node as storing the value $\E[Y|X_{i_1}=a_1,\dots,X_{i_k}=a_k]$. In particular, the leaves store the possible values of $Y$, since by then all relevant random choices have been determined.   
Define the \emph{variance of a query} (internal node) $q$ concerning choice $X_i$ to be
$$v_q = \sum_{a\in A_i}\Pr[X_i=a]\mu_{q,a}^{2},$$
where
$$\mu_{q,a} = \E[Y |X_i = a \text{ and all previous queries}] - \E[Y|\text{ all previous queries}].$$
By ``all previous queries'', we mean the condition imposed by the queried choices
and exposed values determined by the path from the root of the strategy down to the
node $q$. In words, $\mu_{q,a}$ measures the amount which our expectation changes when the answer to query $q$ is revealed to be $a$. 
Also define the maximum effect of query $q$ as
$$c_q = \max_{a,b\in A_i}|\mu_{q,a} - \mu_{q,b}|.$$
A way to think about $c_q$ is the following. Consider the children of node $q$; $c_q$ is the maximum difference between any values $\E[Y| \text{ all previous queries}]$ stored at the children. 
A \emph{line of questioning $\ell$} is a path in the decision tree from the root to a leaf and the variance of a line of questioning is the sum of the variances of the queries along it. Finally, the \emph{variance of a strategy} $\mathscr{S}$ is the maximum variance over all lines of questioning
$$V(\mathscr{S}) = \max_\ell \sum_{q\in \ell}v_{q}.$$ 

We are now ready to state Grable's concentration inequality.  
\begin{theorem}[\cite{Grable}] 
\label{grable:concentration-inequality}
Let $\mathscr{S}$ be a strategy for determining a random variable $Y$, and suppose the variance of $\mathscr{S}$ is at most $V$. Then, for every $0\leq \varphi \leq V/\max{c_q^{2}}$,
$$\Pr\left[|Y-\Ex[Y]| > 2\sqrt{\varphi V}\right] \leq 2\exp(-\varphi).$$
\end{theorem}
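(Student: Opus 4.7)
The plan is to prove this via the Doob martingale associated with the strategy $\mathscr{S}$ together with a Bernstein-type bound on the moment generating function, followed by Markov's inequality. Fix any realization $(x_i)$ of the underlying random variables; it traces out a line of questioning $\ell = (q_1, q_2, \ldots)$ in the decision tree from root to leaf. Let $\mathcal{F}_k$ be the $\sigma$-algebra generated by the answers to the first $k$ queries along $\ell$, and define
\[
M_0 := \E[Y], \qquad M_k := \E[Y \mid \mathcal{F}_k], \qquad D_k := M_k - M_{k-1}.
\]
By construction $(M_k)$ is a martingale ending at $Y$, and the definitions of $\mu_{q,a}$, $v_q$, and $c_q$ in the excerpt give exactly $\E[D_k \mid \mathcal{F}_{k-1}] = 0$, $\E[D_k^2 \mid \mathcal{F}_{k-1}] = v_{q_k}$, and $|D_k| \leq c_{q_k}$ almost surely.

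Next I would bound the conditional MGF of each increment by the standard Bernstein estimate: using $e^x - 1 - x \leq x^2$ valid for $x \leq 1$, for any $\lambda > 0$ with $\lambda c_{q_k} \leq 1$,
\[
\E\!\left[e^{\lambda D_k} \mid \mathcal{F}_{k-1}\right] \leq 1 + \lambda^2 \E[D_k^2 \mid \mathcal{F}_{k-1}] \leq \exp\!\left(\lambda^2 v_{q_k}\right),
\]
where the mean-zero property kills the linear term. Telescoping the tower of conditional expectations along $\ell$ and invoking the definition $V(\mathscr{S}) = \max_\ell \sum_{q \in \ell} v_q$ yields
\[
\E\!\left[e^{\lambda(Y - \E[Y])}\right] = \E\!\left[\prod_k e^{\lambda D_k}\right] \leq \exp\!\left(\lambda^2 \sum_k v_{q_k}\right) \leq \exp(\lambda^2 V),
\]
provided $\lambda \cdot \max_q c_q \leq 1$, so that the Bernstein bound applies on every branch simultaneously.

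Now a standard Chernoff step finishes the proof: Markov gives $\Pr[Y - \E[Y] \geq t] \leq \exp(\lambda^2 V - \lambda t)$, and optimizing at $\lambda = t/(2V)$ produces $\Pr[Y - \E[Y] \geq t] \leq \exp(-t^2/(4V))$, valid whenever $t/(2V) \leq 1/\max_q c_q$, i.e., $t \leq 2V/\max_q c_q$. Setting $t := 2\sqrt{\varphi V}$, the admissibility condition becomes exactly $\varphi \leq V/\max_q c_q^2$, and the bound becomes $\exp(-\varphi)$. Applying the same argument to the martingale $-M_k$ handles the lower tail; a union bound over the two tails supplies the factor of $2$.

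The only subtle point will be the handling of the supremum in the definition of $V(\mathscr{S})$: the martingale differences live on a random line of questioning whose total conditional variance $\sum_k v_{q_k}$ depends on the realization, but since this sum is deterministically bounded by $V$ along every branch, the MGF estimate factors cleanly and the argument goes through without needing to track branches separately. This is the usual reduction from a decision-tree formulation to a conventional Freedman/Bernstein-type martingale inequality, and no new ideas beyond the computations sketched above are required.
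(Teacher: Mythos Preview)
The paper does not prove this statement; it is quoted from Grable \cite{Grable} as a black-box tool and used without proof in the appendix. So there is no ``paper's own proof'' to compare against.

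That said, your proposal is correct and is essentially the standard argument one would expect: the Doob martingale along the strategy tree, a Bernstein-type bound $\E[e^{\lambda D_k}\mid\mathcal{F}_{k-1}]\le e^{\lambda^2 v_{q_k}}$ on each increment (valid since $|D_k|\le c_{q_k}$, which follows from $\E[D_k\mid\mathcal{F}_{k-1}]=0$ and the definition of $c_q$), and then the Chernoff step. The one place your write-up is slightly informal is the ``telescoping'' step: because the query sequence and hence the $v_{q_k}$ are themselves random (adapted), one cannot literally pull them outside the expectation one at a time. The clean way to say it is that $Z_k:=\exp\bigl(\lambda\sum_{j\le k}D_j-\lambda^2\sum_{j\le k}v_{q_j}\bigr)$ is a supermartingale with $Z_0=1$, and then the deterministic bound $\sum_j v_{q_j}\le V$ along every branch gives $\E[e^{\lambda(Y-\E Y)}]\le e^{\lambda^2 V}$. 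You flag exactly this point in your final paragraph, so the proof goes through.
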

We note that $\max c_q^{2}$ will always be at most $16$ in all the applications of \cref{grable:concentration-inequality} that we will need (\cite{DGP}).\\

The next theorem records the bounds on the variance (in the above sense) of various random variables which we are interested in. 

\begin{theorem}[\cite{DGP}, Lemmas 10, 13 and 16, and the discussion in Section 5.5.]
\label{thm:nibbling-variance-bounds}
Fix $0\leq i \leq t_\epsilon -1$. Let $X_{u} := |A_{i}(u)| - |A_{i+1}(u)|$, $Y_{e} := |A_i(e)| - |A_{i+1}(e)|$, and $Z_{\gamma,u}: = \deg_{i+1,\gamma}(u)$. 
Suppose the assumptions of \cref{thm:nibbling-expectation} are satisfied. Then, for any fixed $u,e,\gamma$, with probability at least $1 - \exp(-\epsilon d_i/10)$, there exist strategies $\mathscr{S}_{X,u}$, $\mathscr{S}_{Y,e}$ and $\mathscr{S}_{Z,\gamma,u}$ for determining $X_{u}$, $Y_{e}$ and $Z_{\gamma,u}$ with the following variance bounds:
\begin{align*}
V(\mathscr{S}_{X,u}) &\leq 100\epsilon d_i \\
V(\mathscr{S}_{Y,e}) &\leq 10000 a_i \\ 
V(\mathscr{S}_{Z,\gamma,u}) &\leq 100 \epsilon a_i. 
\end{align*}
\end{theorem}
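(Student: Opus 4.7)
The plan is to follow and adapt the variance analysis of \cite{DGP}. For each $W \in \{X_u, Y_e, Z_{\gamma,u}\}$, the idea is to design an adaptive querying strategy $\mathscr{S}_W$ that exposes the randomness of stage $i$ of \cref{algorithm:nibble} in two phases: first the nibble selections at each vertex in a relevant neighborhood, revealed vertex by vertex; then the tentative colors at each selected edge, revealed in a fixed order. Grable's inequality (\cref{grable:concentration-inequality}) then reduces the task to bounding $\sum_{q \in \ell} v_q$ along every line of questioning $\ell$.

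The first step is to isolate a ``good event'' $\mathcal{G}$, depending only on the selection randomness of stage $i+1$, under which the number of selected edges in the neighborhood relevant to $W$ (the $1$-neighborhood of $u$ for $X_u$, the union of $1$-neighborhoods of the two endpoints of $e$ for $Y_e$, and the $2$-neighborhood of $u$ for $Z_{\gamma,u}$) is within a constant factor of its expectation. Chernoff's inequality (\cref{chernoff}), combined with the inductive palette-size hypothesis of \cref{thm:nibbling-expectation}, gives $\Pr[\mathcal{G}^c] \leq \exp(-\epsilon d_i/10)$ after a union bound over a bounded number of vertices. The strategies will be constructed so that, conditioned on $\mathcal{G}$, the stated variance bounds hold deterministically.

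Next, each random variable is analyzed in turn. For $X_u$, which counts edges incident to $u$ that receive a final color, the relevant queries are those in the $1$-neighborhood of $u$: about $d_i$ Phase~1 selection queries, each of max effect $O(1)$ and variance $O(\epsilon)$, contributing $O(\epsilon d_i)$ in total; and on $\mathcal{G}$, at most $O(\epsilon d_i^2)$ Phase~2 tentative-color queries, each of which, by the inductive bound $|A_i(e)| = \Theta(a_i)$, shifts the conditional expectation of $X_u$ by $O(1/a_i)$ and hence contributes variance $O(1/a_i)$; after accounting for which colors can actually create a conflict, their total contribution is also $O(\epsilon d_i)$. The strategy for $Z_{\gamma,u}$ is analogous, restricted to the single color $\gamma$, and yields $V(\mathscr{S}_{Z,\gamma,u}) \leq 100\epsilon a_i$. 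For $Y_e$, a color is lost from $A_i(e)$ whenever an edge incident to either endpoint receives that color; summing over the $\Theta(a_i)$ colors, each eliminated with an approximately independent probability $\Theta(\epsilon)$, one obtains a variance bound of $O(a_i)$, which is a factor of $1/\epsilon$ worse than the naive calculation but still sufficient for the downstream application since $\E[Y_e] = \Theta(\epsilon a_i)$.

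The main technical obstacle, in my view, is the per-query variance bound in Phase~2: one must verify that, conditioned on all previously revealed queries, a single tentative color choice $t(f) = c$ shifts the conditional expectation of $W$ by at most $O(1/a_i)$, uniformly in $c$, and that the effects of different color queries do not compound adversarially. This is exactly where the regularity bounds of \cref{thm:nibbling-expectation}, namely $|A_i(u)| = (1 \pm e_i)d_i$, $|A_i(e)| = (1 \pm e_i)a_i$, and $\deg_{i,\gamma}(u) = (1 \pm e_i)a_i$, play their crucial role: they guarantee that no color is disproportionately popular and that neighborhoods appear uniform, so conflict propagation is controlled. Once these per-query bounds are in place, summing along any line of questioning collapses to the numerical constants $100$, $10000$, and $100$ stated in the theorem.
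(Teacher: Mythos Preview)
The paper does not prove this theorem; it is quoted directly from \cite{DGP} (Lemmas 10, 13, 16 and Section 5.5 there) and used as a black box. So there is no ``paper's own proof'' to compare your proposal against.

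That said, your sketch is a faithful high-level summary of how the argument in \cite{DGP} actually runs: a two-phase adaptive strategy (selection choices first, then tentative colors), a Chernoff-type good event on the nibble that accounts for the $1-\exp(-\epsilon d_i/10)$ qualifier, and per-query variance bounds derived from the inductive regularity hypotheses $|A_i(e)|=(1\pm e_i)a_i$ and $\deg_{i,\gamma}(u)=(1\pm e_i)a_i$. The one place where your accounting is a bit loose is the Phase~2 contribution for $X_u$: the effect of a single tentative-color query $t(f)$ on $\E[X_u\mid\cdot]$ is not uniformly $O(1/a_i)$ but rather $O(1)$ for at most $O(1)$ ``dangerous'' colors and $0$ otherwise, and it is this sparsity (together with the $\Theta(a_i)$-sized palettes) that makes the variance sum collapse to $O(\epsilon d_i)$ rather than the cruder $O(\epsilon d_i^2/a_i)$ one would get from a uniform $O(1/a_i)$ bound. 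This is exactly the care taken in \cite{DGP}, and your final paragraph does gesture at it, but a complete write-up would need to make this step explicit.
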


As an immediate application of \cref{grable:concentration-inequality} along with these bounds, we obtain the following corollary, which shows that random variables in the conclusion of \cref{thm:nibbling-expectation} are indeed sufficiently well-concentrated around their expectation.

\begin{corollary} 
\label{corollary:nibbling-concentration}
Fix $0\leq i \leq t_\epsilon -1$. Suppose that the assumptions of \cref{thm:nibbling-expectation} are satisfied. For any $u,e,\gamma$, let $\mathcal{X}_{u}$ be the event that $\big{|}\hspace{1pt}|A_{i+1}(u)| - \Ex|A_{i+1}(u)|\hspace{1pt}\big{|} \geq \sqrt{\epsilon{d_i^{5/3}}}$, $\mathcal{Y}_{e}$ be the event that $\big|\hspace{1pt} |A_{i+1}(e)| - \Ex[|A_{i+1}(e)|]\hspace{1pt}\big| \geq \sqrt{{a_i^{5/3}}}$, and $\mathcal{Z}_{\gamma,u}$ be the event that $\big|\hspace{1pt}\deg_{i+1,\gamma}(u) - \Ex[\deg_{i+1,\gamma}(u)]\hspace{1pt}\big| \geq \sqrt{\epsilon{a_i^{5/3}}}$. Then,
\begin{align*}
\Pr[\mathcal{X}_{u}] &\leq 2\exp(-d_i^{1/3}) + \exp(-\epsilon d_i/10)  \\
\Pr[\mathcal{Y}_{e}] &\leq 2\exp(-a_i^{1/3}) + \exp(-\epsilon d_i/10)\\
\Pr[\mathcal{Z}_{\gamma,u}] &\leq 2\exp(-a_i^{1/3}) + \exp(-\epsilon d_i/10). 
\end{align*}
\end{corollary}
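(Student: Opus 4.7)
The plan is to reduce each of the three events to a tail bound for $X_u$, $Y_e$, or $Z_{\gamma,u}$, apply Grable's inequality (\cref{grable:concentration-inequality}) with the variance bounds supplied by \cref{thm:nibbling-variance-bounds}, and pay an extra $\exp(-\epsilon d_i/10)$ to cover the small-probability event on which the strategies fail to have those variances. The first step is purely algebraic: since $|A_i(u)|$ and $|A_i(e)|$ are determined by the state at the start of stage $i$, we have
\[
|A_{i+1}(u)| - \E[|A_{i+1}(u)|] = -(X_u - \E[X_u]), \qquad |A_{i+1}(e)| - \E[|A_{i+1}(e)|] = -(Y_e - \E[Y_e]),
\]
and $Z_{\gamma,u}$ equals $\deg_{i+1,\gamma}(u)$ by definition. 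Hence $\mathcal{X}_u$, $\mathcal{Y}_e$, $\mathcal{Z}_{\gamma,u}$ are precisely the two-sided tail events $\{|X_u - \E X_u| \geq \sqrt{\epsilon d_i^{5/3}}\}$, $\{|Y_e - \E Y_e| \geq \sqrt{a_i^{5/3}}\}$, and $\{|Z_{\gamma,u} - \E Z_{\gamma,u}| \geq \sqrt{\epsilon a_i^{5/3}}\}$.

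Second, let $\mathcal{G}$ be the good event of \cref{thm:nibbling-variance-bounds} on which the strategies $\mathscr{S}_{X,u}$, $\mathscr{S}_{Y,e}$, $\mathscr{S}_{Z,\gamma,u}$ exist with variances at most $100\epsilon d_i$, $10000 a_i$, $100\epsilon a_i$ respectively; by hypothesis $\Pr[\overline{\mathcal{G}}] \leq \exp(-\epsilon d_i/10)$, which will account for the second summand of each desired bound. Conditional on $\mathcal{G}$, I will apply \cref{grable:concentration-inequality} separately to each strategy. For $X_u$, set $\varphi_X = d_i^{1/3}$; then
\[
2\sqrt{\varphi_X V(\mathscr{S}_{X,u})} \leq 2\sqrt{100 \epsilon d_i^{4/3}} = 20\sqrt{\epsilon d_i^{4/3}} \leq \sqrt{\epsilon d_i^{5/3}},
\]
where the final inequality holds once $d_i \geq 400^3$. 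For $Y_e$ and $Z_{\gamma,u}$, set $\varphi_Y = \varphi_Z = a_i^{1/3}$; the analogous computations give $2\sqrt{\varphi_Y V(\mathscr{S}_{Y,e})} \leq 200\sqrt{a_i^{4/3}} \leq \sqrt{a_i^{5/3}}$ as soon as $a_i \geq 40000^3$, and $2\sqrt{\varphi_Z V(\mathscr{S}_{Z,\gamma,u})} \leq 20\sqrt{\epsilon a_i^{4/3}} \leq \sqrt{\epsilon a_i^{5/3}}$ as soon as $a_i \geq 400^3$. Grable's conclusion then yields the conditional tails $2\exp(-d_i^{1/3})$, $2\exp(-a_i^{1/3})$, $2\exp(-a_i^{1/3})$.

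The only step requiring more than rote substitution is verifying Grable's admissibility condition $\varphi \leq V(\mathscr{S})/\max_q c_q^2$; using the bound $\max_q c_q^2 \leq 16$ noted in the paper, this reduces to the pointwise inequalities $\epsilon d_i^{2/3}, a_i^{2/3}, \epsilon a_i^{2/3} \gtrsim 1$, all of which follow from the standing lower bounds $d_i \geq \epsilon^2\Delta/16$ and $a_i \geq d_i^2/\Delta \geq \epsilon^4\Delta/256$ once the universal constant $D$ in \cref{thm:nibbling-expectation} is taken large enough to absorb the cubes $400^3$ and $40000^3$ that appeared above. I do not anticipate any substantive obstacle beyond this bookkeeping; the rest is arithmetic. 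Combining the conditional tails with $\Pr[\overline{\mathcal{G}}] \leq \exp(-\epsilon d_i/10)$ via a simple union bound yields exactly the three inequalities claimed.
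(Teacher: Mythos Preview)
Your argument is correct and follows essentially the same route as the paper: apply \cref{grable:concentration-inequality} with $\varphi=d_i^{1/3}$ (for $\mathcal{X}_u$) or $\varphi=a_i^{1/3}$ (for $\mathcal{Y}_e$, $\mathcal{Z}_{\gamma,u}$), use the variance bounds from \cref{thm:nibbling-variance-bounds}, and absorb the failure probability $\exp(-\epsilon d_i/10)$ additively. You are simply more explicit than the paper about the reduction from $|A_{i+1}(u)|$ to $X_u$, the admissibility check $\varphi\le V/\max_q c_q^2$, and the constants that get absorbed into $D$.
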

\begin{proof}
The variance bounds from \cref{thm:nibbling-variance-bounds} hold for any fixed $u,e,\gamma$ except with probability at most $\exp(-\epsilon d_i/10)$. Whenever these bounds hold, we apply \cref{grable:concentration-inequality} with $\varphi = d_i^{1/3}$ (in the case of $\mathcal{X}_{u}$) or $\varphi = a_i^{1/3}$ (in the case of $\mathcal{Y}_{e}$ and $\mathcal{Z}_{\gamma,u}$). Finally, we use that $2000a_{i}^{4/3} \leq a_{i}^{5/3}$ and $2000d_{i}^{4/3}\leq d_{i}^{5/3}$ since $\epsilon^{2}\Delta \geq D$ by assumption.  
\end{proof}

The next lemma is tailored for our application. Roughly speaking, we are given matchings $M_1,\ldots,M_s$ in the complete graph on $V(G)$. We wish to design a random procedure to properly color (most of) the edges of $G$ using $s$ colors in such a way that by considering the matchings $M'_1,\ldots,M'_s$ induced by each color class, the probability of any vertex $u$ becoming part of too many short cycles in any of the graphs $M_i\cup M'_i$ is sufficiently small.
\begin{lemma}
\label{prop:cycle-bound}
Let $M_1,\dots, M_s$ be a fixed collection of matchings in the complete graph on $V(G)$, where $s:=\Delta+1$. Let $0 < \beta < 1/10$. For any vertex $u$, let $C_{i}(u)$ denote the number of indices $\gamma$ for which $u$ lies on a cycle of length at most $\Delta^{\beta/2}$ in $M_\gamma\cup M'_\gamma$ by the end of round $i$ of the algorithm. Let $\mathcal{C}_{i,u}$ denote the event that $C_{i+1}(u) - C_{i}(u) \geq 1000\Delta^{\beta}/\epsilon^{4} $. 
Suppose that the assumptions of \cref{thm:nibbling-expectation} are satisfied. Then,
$$\Pr[\mathcal{C}_{i,u}]\leq \exp(-\Delta^{\beta}/\epsilon^{4}).$$
\end{lemma}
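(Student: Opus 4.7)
My plan is to bound $\E[C_{i+1}(u)-C_i(u)]$ by a careful counting argument over potential new short cycles through $u$, and then to appeal to a concentration inequality (in the spirit of \cref{grable:concentration-inequality}) to conclude that large deviations above this (small) expectation are exponentially rare. Specifically, I write $C_{i+1}(u)-C_i(u) = \sum_{\gamma} X_\gamma$, where $X_\gamma$ is the indicator that $u$ lies on a cycle of length at most $L := \Delta^{\beta/2}$ in $M_\gamma\cup M'_\gamma$ at the end of round $i+1$ but not at the end of round $i$. Since $M_\gamma$ and $M'_\gamma$ are matchings, $M_\gamma\cup M'_\gamma$ has maximum degree $2$ and therefore contains at most one cycle through $u$; hence $X_\gamma\in\{0,1\}$, and $X_\gamma=0$ unless $u$ is matched in $M_\gamma$, which already cuts the sum down to at most $\Delta+1$ nontrivial terms.

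For the expectation estimate, I note that if $X_\gamma=1$ then the unique cycle through $u$ of length $2k\leq L$ must contain at least one $M'_\gamma$-edge that was newly colored in round $i+1$. I would union-bound over $\gamma$, over the length $2k$ for $k\leq L/2$, over the at most $\Delta^{k-1}$ choices of intermediate vertices of the cycle, and over the nonempty subset of the $k$ $M'_\gamma$-edges newly colored in round $i+1$, using the estimates $\Pr[e\text{ colored }\gamma\text{ in round }i+1]\leq O(\epsilon/a_i)$ and $\Pr[e\text{ colored }\gamma\text{ by end of round }i+1]\leq O(1/\Delta)$ (the latter following from color symmetry of the nibble, since $\sum_\gamma \Pr[e\text{ colored }\gamma]\leq 1$ and the algorithm treats all colors identically). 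I expect this to yield $\E[C_{i+1}(u)-C_i(u)] = O(\Delta^{\beta/2})$, which is smaller than the target threshold $1000\Delta^\beta/\epsilon^4$ by a polynomial factor in $\Delta$.

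For the concentration step, I view $Y:=C_{i+1}(u)-C_i(u)$ as a function of the round-$(i+1)$ random variables (the selection and, where applicable, tentative color of each edge of $G_i$). Altering any one such variable modifies $M'_\gamma$ for at most two values of $\gamma$ (the old and new tentative colors) and hence changes $Y$ by $O(1)$. Combined with the small expectation, this bounded sensitivity admits a querying strategy in the sense of \cref{grable:concentration-inequality} whose total variance is $O(\E[Y]) = O(\Delta^{\beta/2})$. Applying that inequality with $\varphi=\Delta^\beta/\epsilon^4$ should then yield $\Pr[Y\geq 1000\Delta^\beta/\epsilon^4]\leq\exp(-\Delta^\beta/\epsilon^4)$, as required; alternatively, after conditioning on the round-$(i+1)$ selections, $Y$ is dominated by a sum of conditionally independent indicators, and a Chernoff Poisson-tail bound gives the same conclusion.

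The main obstacle will be rigorously executing the expectation estimate---in particular, justifying a multiplicative probability bound of the form $(O(1/\Delta))^k$ for the event that $k$ specific non-adjacent edges are all eventually colored $\gamma$, since these events are not strictly independent under the nibble process. However, since the $M'_\gamma$-edges of any cycle are pairwise non-adjacent, combining color symmetry with the matching structure of $M'_\gamma$ (as exploited in \cite{DGP}) should give this bound up to absolute constants, which is enough to drive the expectation down to $O(\Delta^{\beta/2})$ and conclude with the desired exponential tail bound.
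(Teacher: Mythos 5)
Your high-level plan (small expectation plus a tail bound) is a reasonable instinct, but the paper proves the lemma by a genuinely different and shorter route, and your concentration step has a concrete gap that the paper's device is designed to avoid.

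The paper does not estimate $\E[C_{i+1}(u)-C_i(u)]$ by summing over cycle shapes, and it never invokes \cref{grable:concentration-inequality} here. Instead, for each $\gamma$ it looks at the maximal path $P_\gamma$ through $u$ in $M_\gamma\cup M'_\gamma$ at the end of round $i$, with endpoints $p_\gamma,q_\gamma$, and observes that $P_\gamma$ can close into a cycle of length at most $\ell$ during round $i+1$ only if one of a \emph{specific} list of at most $\lceil\Delta^{\beta/2}/2\rceil+1$ ``trigger edges'' $\{w_{2j,\gamma},q_\gamma\}$ receives \emph{tentative} color $\gamma$ in that round (the $w$'s are obtained by alternately following an $M_\gamma$-edge and a $\gamma$-colored edge from $p_\gamma$). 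Since each edge chooses its tentative color independently and uniformly from a palette of size at least $a_i/2\geq\epsilon^4\Delta/512$, the quantity $C_{i+1}(u)-C_i(u)$ is stochastically dominated, after conditioning on the trigger vertices, by $\Bin(\Delta^{1+\beta/2},512/(\epsilon^4\Delta))$, and a single Chernoff bound finishes the proof. The entire point is to replace ``a new short cycle appears'' (which depends on which edges get \emph{final} colors, a highly correlated family of events) by ``one of a few named edges gets a particular \emph{tentative} color'' (a genuine product of independent choices).

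Your concentration step does not work as stated. You want to apply \cref{grable:concentration-inequality} with $\varphi=\Delta^\beta/\epsilon^4$, but that inequality imposes $\varphi\leq V/\max c_q^2$; with $\max c_q=O(1)$ and your claimed $V=O(\E[Y])=O(\Delta^{\beta/2})$, the allowed $\varphi$ is capped at $O(\Delta^{\beta/2})$, far below $\Delta^\beta/\epsilon^4$, so the application is invalid. (One can try to salvage this by deliberately overestimating $V$ by $\Theta(\Delta^\beta/\epsilon^4)$, but that needs to be said, and it also relies on modeling the selection step as independent per-\emph{edge} coins; if each vertex draws a single random subset, then flipping one selection variable can change the selection status of $\Theta(\Delta)$ edges and hence $\Theta(\Delta)$ of the $X_\gamma$, blowing up $\max c_q$.) Your ``alternatively'' route also has a gap: after conditioning on selections, the indicators $X_\gamma$ are \emph{not} conditionally independent, because whether an edge receives final color $\gamma$ depends on the tentative colors of its neighbors, and the same neighbor's single tentative-color choice is shared randomness for the finalization of several different colors. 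You have not exhibited a family of genuinely independent indicators dominating $Y$; the trigger-edge construction in the paper is exactly the device that produces such a family. Finally, you correctly flag the difficulty in your expectation estimate---justifying a product bound $(O(1/\Delta))^k$ for correlated eventual-coloring events---but the paper's proof never needs such a bound, since it only ever asks for a single tentative-color event per potential cycle.
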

\begin{proof}
Fix $u \in V(G)$ as in the statement of the lemma. For each $\gamma \in [s]$ such that $u$ is not already lying on a cycle in $M_\gamma\cup M'_\gamma$ by the end of round $i$ of the algorithm, let $P_{\gamma}$ denote the unique maximal path (with a fixed, but otherwise arbitrary, orientation) in $M_\gamma\cup M'_\gamma$ containing $u$ at the end of round $i$. Observe that if the first and last edges of $P_\gamma$ do not belong to $M_\gamma$, then $P_\gamma$ cannot be extended to a cycle in $M_\gamma\cup M'_\gamma$. Let $p_\gamma$ denote the first vertex of $P_\gamma$ and let $q_\gamma$ denote its last vertex. For any vertex $v \in V(G)$, let $v^{\gamma}$ denote the unique (if it exists) vertex such that $\{v,v^\gamma\} \in M_\gamma$, and let $\overline{v}^{\gamma}$ denote the unique (if it exists) vertex such that the edge $\{v,\overline{v}^{\gamma}\}$ is colored $\gamma$ during the execution of the algorithm by the end of round $i+1$. Finally, for each $\gamma \in [s]$, consider the following sequence of vertices defined inductively: $w_{0,\gamma} := p_\gamma$, $w_{2i+1,\gamma} := \overline{w_{2i,\gamma}}^{\gamma}$ for $i\geq 0$, and $w_{2i,\gamma} := w_{2i-1,\gamma}^\gamma$ for $i\geq 1$. 

Note that $P_\gamma$ closes into a cycle of length at most $\ell=2\lceil\Delta^{\beta/2}/2\rceil + 1$ in $M_{\gamma}\cup M'_{\gamma}$ during the $(i+1)^{st}$ round only if one of the vertices $w_{1,\gamma},w_{3,\gamma}, w_{5,\gamma},\dots,w_{\ell,\gamma}$ is $q_\gamma$. In particular, 
at least one of the edges $\{w_{0,\gamma},q_\gamma\}, \{w_{2,\gamma},q_\gamma\},\dots,$ $\{w_{\ell-1,\gamma},q_\gamma\}$ must be \emph{tentatively} colored by $\gamma$ during the $(i+1)^{st}$ round. Letting $E_\gamma$ denote the random variable recording the number of such edges, it follows that $C_{i+1}(u) - C_i(u) \leq \sum_{\gamma\in[s]}E_\gamma$. Moreover, since a given edge $e$ is tentatively colored by a given color $\gamma$ during the $(i+1)^{st}$ round with probability at most $1/|A_i(e)| \leq 2/a_i \leq 2\Delta/d_{t_\epsilon}^2 \leq 512/\Delta \epsilon^{4}$, and since the tentative colors for different edges are chosen independently, we see that conditioning on any choice for the collection of vertices $\{w_{2,\gamma}\notin P_\gamma,\dots,w_{\ell-1,\gamma}\notin P_\gamma\}_{\gamma \in [s]}$, the random variable $\sum_{\gamma \in [s]}E_\gamma$ is stochastically dominated by the random variable $\Bin(\Delta^{1+(\beta/2)},512/\Delta\epsilon^{4})$. Therefore, by Chernoff's bound for the binomial distribution followed by the law of total probability to remove the conditioning, it follows that 
$$\Pr[\mathcal{C}_{i,u}] \leq \Pr[\Bin(\Delta^{1+(\beta/2)},512/\Delta\epsilon^{4})\geq 1000\Delta^{\beta}/\epsilon^{4}]\leq \exp(-\Delta^{\beta}/\epsilon^{4}),$$
which completes the proof. 
\end{proof}

The following lemma combines \cref{corollary:nibbling-concentration} and \cref{prop:cycle-bound} to prove the existence of a `good' outcome of a given round of the algorithm.

\begin{lemma} 
\label{lemma:one-step}
Fix $0\leq i \leq t_{\epsilon}-1$. Suppose that the assumptions of \cref{prop:cycle-bound} are satisfied and $\Delta^{-1/B} \leq \epsilon < 1/100$ for some $B > 20$. 
Then with positive probability, the following holds at the end of stage $i+1$ for all vertices $u$, edges $e$ and colors $\gamma$ simultaneously:
\begin{align*}
|A_{i+1}(u)| & =(1\pm e_{i+1})d_{i+1}\\
|A_{i+1}(e)| & =(1\pm e_{i+1})a_{i+1}\\
\deg_{i+1,\gamma}(u) & =(1\pm e_{i+1})a_{i+1}\\
C_{i+1}(u) - C_i(u) &\leq 1000\Delta^{\beta}/\epsilon^{4},
\end{align*}
where  $0 < \beta < \frac{1}{10} - \frac{1}{B}$ is fixed, and
\begin{equation}
\label{eqn:recursion-error}
e_{i+1}=\kappa\left(e_{i}+\sqrt{a_{i}^{-1/3}}\right)\leq \kappa\left(e_{i}+(1-p_\epsilon)^{-i}\Delta^{-1/6}\right), 
\end{equation}
with $\kappa = 1+K\epsilon$.  
\end{lemma}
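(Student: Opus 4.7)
The plan is to apply the Lov\'asz Local Lemma (\cref{LLL}) to a family of four types of ``bad events'' corresponding to the four quantities we need to control after round $i+1$. For each vertex $u$, edge $e$, and color $\gamma$, let $\mathcal{X}_u$, $\mathcal{Y}_e$, $\mathcal{Z}_{\gamma,u}$ be the deviation-from-expectation events defined in \cref{corollary:nibbling-concentration}, and let $\mathcal{C}_{i,u}$ be the short-cycle event from \cref{prop:cycle-bound}. All of these events are determined by the independent random variables in round $i+1$: which edges each vertex selects in the nibble, and which tentative color each selected edge picks.

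Step one: show that if none of these events occur, then the four conclusions of the lemma hold. The first three conclusions follow by combining the expectation identities of \cref{thm:nibbling-expectation} with the concentration bounds of \cref{corollary:nibbling-concentration}. For instance, for a vertex $u$, the triangle inequality gives
\[
\bigl||A_{i+1}(u)|-d_{i+1}\bigr|\leq(1+K\epsilon)e_{i}\,d_{i+1}+\sqrt{\epsilon d_i^{5/3}},
\]
and dividing by $d_{i+1}=(1-p_\epsilon)d_i$ yields a relative deviation bounded by $(1+K\epsilon)e_i+O(\sqrt{a_i^{-1/3}})$ after using $a_i=d_i^2/\Delta$ and $\epsilon<1/100$; this is at most $e_{i+1}=\kappa(e_i+\sqrt{a_i^{-1/3}})$ for suitable $\kappa=1+K\epsilon$. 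The same computation handles $|A_{i+1}(e)|$ and $\deg_{i+1,\gamma}(u)$, where the concentration thresholds are directly $\sqrt{a_i^{5/3}}$ and $\sqrt{\epsilon a_i^{5/3}}$. Finally, a short calculation $\sqrt{a_i^{-1/3}}=(1-p_\epsilon)^{-i/3}\Delta^{-1/6}\le(1-p_\epsilon)^{-i}\Delta^{-1/6}$ gives the claimed upper bound in \eqref{eqn:recursion-error}. The fourth conclusion is the direct negation of $\mathcal{C}_{i,u}$.

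Step two: verify the LLL hypotheses. Using $\epsilon\geq\Delta^{-1/B}$ with $B>20$, one has $d_i\geq d_{t_\epsilon}\geq \epsilon^2\Delta/16\geq \Delta^{1-2/B}/16$ and $a_i\geq\epsilon^4\Delta/256\geq\Delta^{1-4/B}/256$, so each of $\exp(-\epsilon d_i/10)$, $\exp(-a_i^{1/3})$, and $\exp(-\Delta^\beta/\epsilon^4)$ is at most $\exp(-\Delta^{c})$ for some absolute $c>0$ (here we use $\beta>0$ and $\epsilon<1/100$). Hence every bad event has probability at most $q=\exp(-\Delta^{c})$. For the dependency graph, observe that $\mathcal{X}_u$, $\mathcal{Y}_e$, and $\mathcal{Z}_{\gamma,u}$ depend only on tentative colors of edges within graph-distance two of $u$ or $e$, a set of size $O(\Delta^2)$. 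The event $\mathcal{C}_{i,u}$ is more delicate: it is determined by the tentative colors assigned to edges incident to the endpoints of the ``extension candidates'' $w_{2j,\gamma}$ appearing in the proof of \cref{prop:cycle-bound}, i.e.\ to at most $O(s\cdot \Delta^{\beta/2}\cdot\Delta)=O(\Delta^{2+\beta/2})$ edges. In every case each event is mutually independent of all but at most $\Delta^{O(1)}$ other bad events, so $eq(\Delta(\Gamma)+1)<1$ holds with enormous slack, and \cref{LLL} yields an outcome of round $i+1$ in which all bad events are avoided.

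The only subtlety worth flagging is the dependency structure of $\mathcal{C}_{i,u}$: because it involves paths of length up to $\Delta^{\beta/2}$ in the partial matchings $M_\gamma\cup M'_\gamma$, one must be careful that it is still a local event in terms of the random variables of round $i+1$ alone. This works because the paths $P_\gamma$ and the vertices $w_{2j,\gamma}$ are determined by data from rounds $\le i$ (which is fixed at the start of round $i+1$), and $\mathcal{C}_{i,u}$ only further depends on the tentative colors of round $i+1$ incident to the finitely many vertices $w_{2j,\gamma}$. Everything else is a routine accounting of the constants.
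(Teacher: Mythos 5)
Your overall route is the same as the paper's: take the bad events $\mathcal{X}_u,\mathcal{Y}_e,\mathcal{Z}_{\gamma,u}$ from \cref{corollary:nibbling-concentration} together with $\mathcal{C}_{i,u}$ from \cref{prop:cycle-bound}, and apply the symmetric local lemma to one round of the nibble; your Step one (deducing the recursion \eqref{eqn:recursion-error} from \cref{thm:nibbling-expectation} plus the concentration thresholds) is correct and in fact more explicit than what the paper writes.

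The genuine gap is in your dependency analysis for the cycle events. You assert that the vertices $w_{2j,\gamma}$ are ``determined by data from rounds $\le i$'', but by their definition $w_{2j+1,\gamma}=\overline{w_{2j,\gamma}}^{\gamma}$, where $\overline{v}^{\gamma}$ is the vertex joined to $v$ by an edge that receives final color $\gamma$ \emph{by the end of round $i+1$}; so these vertices are revealed adaptively by the round-$(i+1)$ randomness itself. Consequently $\mathcal{C}_{i,u}$ depends on the selections and tentative colors of edges throughout a ball of radius roughly $\Delta^{\beta/2}$ around $u$ in $G$ together with the matchings $M_\gamma$, i.e.\ on the randomness at up to about $\Delta^{\Theta(\Delta^{\beta/2})}$ vertices — superpolynomial in $\Delta$, not the $O(\Delta^{2+\beta/2})$ edges you claim, and not ``each event is mutually independent of all but $\Delta^{O(1)}$ others''. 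This is exactly why the paper bounds the dependency degree by $\mathrm{poly}(\Delta)\,\Delta^{4\Delta^{\beta/2}}\le\exp(\Delta^{3\beta/4})$ and then must check that $\exp(-\Delta^{\beta}/\epsilon^{4})$ and $\exp(-\epsilon d_{t_\epsilon}/10)+2\exp(-a_{t_\epsilon}^{1/3})$ beat $\exp(\Delta^{3\beta/4})$; this comparison is where the hypotheses $\epsilon\ge\Delta^{-1/B}$ and $\beta<\tfrac{1}{10}-\tfrac{1}{B}$ actually do work, rather than the condition holding ``with enormous slack'' against a polynomial degree. The argument is repairable — your bounds $q\le\exp(-\Delta^{c})$ with $c=\min\{\beta,\ (1-4/B)/3,\ 1-3/B\}$ (up to constants) do exceed $3\beta/4$ in the stated parameter range — but as written the LLL verification rests on a false locality claim and never uses the constraint on $\beta$, which is the one nontrivial point of this lemma.
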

\begin{proof}
Let $\mathcal{X}_{u}$, $\mathcal{Y}_{e}$, $\mathcal{Z}_{\gamma,u}$ be the events defined in \cref{corollary:nibbling-concentration}, and let $\mathcal{C}_{u}:=\mathcal{C}_{i,u}$ be the event defined in \cref{prop:cycle-bound}. It suffices to show that 
$$\Pr\left[\left(\bigcap_{u\in V(G)}\mathcal{X}_{u}^{c}\right)\cap\left(\bigcap_{e\in E(G)}\mathcal{Y}_{e}^{c}\right)\cap \left(\bigcap_{\gamma\in[s],u\in V(G)}\mathcal{Z}_{\gamma,u}^{c}\right)\cap \left(\bigcap_{u\in V(G)}\mathcal{C}_{u}^{c}\right)\right]>0.$$ 
We will show this using the symmetric local lemma. To this end, we note that two events of the form $\mathcal{E}(u,e,\gamma)$ and $\mathcal{E}(u',e',\gamma')$ can depend on each other only if at least one of $u$ or $e$ is within distance at most (say) $4\Delta^{\beta/2}$ from one of $u'$ or $e'$. Since the maximum degree of $G$ is $\Delta$, it follows that the dependency graph of the events listed above has maximum degree at most (say) $\text{poly}(\Delta)\Delta^{4\Delta^{\beta/2}}\leq \exp(\Delta^{3\beta/4})$, where the last inequality holds for all $\Delta$ sufficiently large. Also, by \cref{prop:cycle-bound}, events of the form $\mathcal{C}_{u}$ hold with probability at most $\exp(-\Delta^{\beta})$, whereas by \cref{corollary:nibbling-concentration}, the other events hold with probability at most $\exp(-\epsilon d_{t_\epsilon}/10) + 2\exp\left(-a_{t_\epsilon}^{1/3}\right)$. Hence, if this latter quantity were much less than $\exp(\Delta^{3\beta/4})$, we would be done. This is indeed true provided that $\beta < \frac{1}{10} - \frac{1}{B}$ and $\Delta$ is sufficiently large. 
\end{proof}

Finally, we iterate \cref{lemma:one-step} to prove the main result of this appendix.

\begin{proposition} 
\label{prop:good-nibbles}
There exist constants $B,D > 20$ for which the following holds. Let $G$ be a graph with maximum degree $\Delta$ and minimum degree $\delta$ such that $\Delta - \delta \leq \Delta^{5/6}$ and $\Delta \geq D$. Let $M_1,\dots,M_s$ be a fixed collection of matchings in the complete graph on $V(G)$, where $s:=\Delta + 1$. Then, for any fixed $ \Delta^{-1/B}\leq \epsilon < 10^{-4}$, the following holds with positive probability for the execution of \cref{algorithm:nibble} on $G$ with parameter $\epsilon$ for $t_{\epsilon}$ stages: for fixed $0 < \beta < \frac{1}{10} - \frac{1}{B}$, for all $0 \leq i \leq t_\epsilon-1$, and for all vertices $u$, all edges $e$, and all colors $\gamma$,
\begin{itemize}
\item $|A_i(u)| = (1\pm \epsilon^{3}) d_i$
\item $|A_i(e)| = (1 \pm \epsilon^{3}) a_i$
\item $\deg_{i,\gamma}(u) = (1\pm \epsilon^{3}) a_i$
\item $C_{i+1}(u) - C_i(u) \leq 1000\Delta^{\beta}/\epsilon^{4}$. 
\end{itemize}
In particular, for every vertex $u$, $C_{t_\epsilon}(u) \leq 1000t_\epsilon\Delta^{\beta}/\epsilon^{4} \leq \Delta^{\beta}/\epsilon^{6}$. Further, the number of uncolored edges incident to $u$ is at most $|A_{t_\epsilon}(u)| \leq 2d_{t_\epsilon} \leq \epsilon\Delta/2$.
\end{proposition}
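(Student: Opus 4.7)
The plan is to prove the proposition by induction on the stage $i$, invoking Lemma~\ref{lemma:one-step} at each step. Conditional on all four good events having held through stage $i$, Lemma~\ref{lemma:one-step} guarantees that they continue to hold at stage $i+1$ with positive probability; chaining these conditional positive probabilities over $i = 0, 1, \dots, t_\epsilon - 1$ yields positive probability for the whole sequence of good outcomes simultaneously.

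For the base case $i = 0$, Remark~\ref{rmk:init-error} gives $|A_0(u)| = |A_0(e)| = \Delta$ exactly and $\deg_{0,\gamma}(u) \in (1 \pm \Delta^{-1/6})\Delta$, so we may take $e_0 \leq \Delta^{-1/6}$. The key calculation is to verify that the recursion (\ref{eqn:recursion-error}) keeps $e_i \leq \epsilon^3$ for every $i \leq t_\epsilon$, which is strong enough to feed back into the hypothesis $e_i \leq 1/2$ required by Theorem~\ref{thm:nibbling-expectation}. Unrolling the recursion yields
$$e_i \;\leq\; \kappa^i e_0 \;+\; \Delta^{-1/6}\sum_{j=0}^{i-1}\kappa^{i-j}(1-p_\epsilon)^{-j}.$$
Since $t_\epsilon = (1/p_\epsilon)\log(4/\epsilon) = O((1/\epsilon)\log(1/\epsilon))$, we have $\kappa^{t_\epsilon} = (1+K\epsilon)^{t_\epsilon} \leq (4/\epsilon)^{2K}$, and by the definition of $t_\epsilon$ together with the lower bound $d_{t_\epsilon} \geq \epsilon^2\Delta/16$, one has $(1-p_\epsilon)^{-t_\epsilon} = \Delta/d_{t_\epsilon} \leq 16/\epsilon^{2}$. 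Substituting into the unrolled recursion gives $e_{t_\epsilon} \leq C_0\, \epsilon^{-(2K+3)}\,\Delta^{-1/6}$ for an absolute constant $C_0$. Since $\epsilon \geq \Delta^{-1/B}$, choosing $B$ large enough (say $B > 6(2K+6)$) ensures $\epsilon^{-(2K+3)}\Delta^{-1/6} \leq \epsilon^{3}$, and the induction closes.

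The ``in particular'' claims then follow easily. Summing the per-stage bound $C_{i+1}(u) - C_i(u) \leq 1000\Delta^{\beta}/\epsilon^{4}$ over $0 \leq i \leq t_\epsilon - 1$, and absorbing the logarithmic factor $t_\epsilon = O((1/\epsilon)\log(1/\epsilon))$ into an extra power of $1/\epsilon$, gives $C_{t_\epsilon}(u) \leq \Delta^{\beta}/\epsilon^{6}$. For the uncolored-edge bound, observe that by the definition of the implicit palette, each edge incident to $u$ that receives a final color removes exactly one color from $A_i(u)$, so the total number of edges at $u$ colored by the end of stage $t_\epsilon$ equals $\Delta - |A_{t_\epsilon}(u)|$; combined with $\deg_G(u) \leq \Delta$, the number of uncolored edges at $u$ is at most $|A_{t_\epsilon}(u)| \leq (1+\epsilon^3)d_{t_\epsilon} \leq 2d_{t_\epsilon} \leq \epsilon\Delta/2$.

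The main obstacle is controlling this accumulated error across the long trajectory: the multiplicative factor $\kappa = 1 + K\epsilon$ compounds over $\Theta((1/\epsilon)\log(1/\epsilon))$ rounds and the additive contribution $(1-p_\epsilon)^{-j}\Delta^{-1/6}$ grows geometrically in $j$, so the total error is of order $\mathrm{poly}(1/\epsilon)\cdot \Delta^{-1/6}$. The hypothesis $\epsilon \geq \Delta^{-1/B}$ with a sufficiently large absolute constant $B$ is precisely what allows this polynomial blow-up in $1/\epsilon$ to be absorbed into the target error $\epsilon^{3}$, and this is what dictates the quantitative constraint on $B$ in the statement.
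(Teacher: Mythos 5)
Your proposal is correct and follows essentially the same route as the paper: iterate \cref{lemma:one-step} round by round, unroll the error recursion \eqref{eqn:recursion-error} into a geometric-type sum, and observe that the accumulated error is $\mathrm{poly}(1/\epsilon)\cdot\Delta^{-1/6}$, which is absorbed into $\epsilon^3$ once $B$ is chosen large enough relative to the constant $K$ from \cref{thm:nibbling-expectation}. The only (cosmetic) difference is that the paper phrases the existence of a good trajectory via a branching-process/root-to-leaf-path picture rather than chaining conditional probabilities, and it bundles the constants into $L=\max\{K,K'\}$ before bounding $e_{t_\epsilon}\leq(1/\epsilon)^{3L}\Delta^{-1/6}$, which builds in enough slack to absorb the $\log(1/\epsilon)$ factor that your intermediate bound $C_0\epsilon^{-(2K+3)}\Delta^{-1/6}$ omits but later acknowledges.
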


\begin{remark} By taking the matchings $M'_1,\dots, M'_{\Delta}$ to be the edges colored (at the end of stage $t_\epsilon$) by $1,\dots,\Delta$ respectively, it is immediately seen that the above proposition, with $\epsilon = \Delta^{-1/B}$ and $\beta = 1/20$, implies \cref{lemma:avoiding short cycles}. 
\end{remark}

\begin{proof}
We view the execution of \cref{algorithm:nibble} as a branching process, where in each round of the algorithm, we branch out according to which edges are assigned final colors, and which final colors are assigned to these edges. Generate this tree for $t_\epsilon$ levels, and consider any root to leaf path such that for each intermediate `branch', the endpoint further from the root satisfies the conclusions of \cref{lemma:one-step} given the parameters at its parent. Such a root-to-leaf path is guaranteed to exist by \cref{lemma:one-step}. To complete the proof, we track the error introduced by the iterative application of \cref{lemma:one-step}, and show that it is no more than what is stated in the proposition. 

Setting $A:= \Delta^{-1/6}$ and $P:=(1-p_\epsilon)^{-1}$, we get from \cref{eqn:recursion-error} that 
$$e_\ell \leq \kappa^{\ell}e_0 + A[\kappa^\ell + \kappa^{\ell-1}P+\dots + \kappa P^{\ell-1}]$$
for all $0\leq \ell \leq t_{\epsilon}$. Since $P = (1-p_\epsilon)^{-1}$ is also of the form $1+ K'\epsilon$ for some constant $K' > 0$, it follows that 
$$e_\ell \leq (1+L\epsilon)^\ell e_0+ \ell(1+L\epsilon)^\ell\Delta^{-1/6},$$
where $L = \max\{K,K'\}$. By \cref{rmk:init-error}, $e_0 \leq (\Delta-\delta)/\Delta \leq \Delta^{-1/6}$. Therefore, 
$$ e_\ell \leq 2\ell \exp(L\epsilon \ell) \Delta^{-1/6}.$$
The right hand side is maximized when $\ell = t_\epsilon$, in which case it is at most 
$$\left(\frac{1}{\epsilon}\right)^{3L}\Delta^{-1/6} \leq \epsilon^{3},$$
where the last inequality holds provided we take $B \geq 18(L+1)$.
\end{proof}
\end{appendix}

\end{document}